\newcommand{\bbN}{{\mathbb{N}}}
\newcommand{\bbR}{{\mathbb{R}}}
\newcommand{\bbC}{{\mathbb{C}}}
\newcommand{\no}{\nonumber}
\newcommand{\supp}{\text{\rm{supp}}}
\newcommand{\beq}{\begin{equation}}
\newcommand{\eeq}{\end{equation}}
\newcommand{\ba}{\begin{align}}
\newcommand{\ea}{\end{align}}
\numberwithin{equation}{section}
\newtheorem{theorem}{Theorem}[section]
\newtheorem{proposition}[theorem]{Proposition}
\newtheorem{lemma}[theorem]{Lemma}
\newtheorem{corollary}[theorem]{Corollary}
\theoremstyle{definition}
\newtheorem{definition}[theorem]{Definition}
\theoremstyle{remark}
\newtheorem{remark}{Remark}[section]
\date{}
\begin{document}
\title[Level repulsion and s.c. spectrum]{Level repulsion for Schr\"odinger operators with singular continuous
spectrum}

\author[J.~Breuer and D.~Weissman]{Jonathan~Breuer and Daniel~Weissman}

\thanks{Institute of Mathematics, The Hebrew University of Jerusalem, Jerusalem, 91904, Israel.
E-mail: jbreuer@math.huji.ac.il; daniel.weissman@mail.huji.ac.il. Supported in part by The Israel Science
Foundation (Grant No. 1105/10)}

\maketitle
\sloppy
\begin{abstract}
We describe a family of half-line continuum Schr\"odinger operators
with purely singular continuous spectrum on $(0,\infty)$, exhibiting asymptotic strong
level repulsion (known as clock behavior). This follows from the convergence of the renormalized continuum Christoffel-Darboux
kernel to the sine kernel.
\end{abstract}

\section{Introduction}

The problem of understanding the asymptotics of finite-volume level spacings for Schr\"odinger operators has been receiving a considerable amount of attention in recent years (\cite{aw, als, brGro, brSine, bfs, findley, andrei, gk, kotani-nakano, kritchevski, LaSic, LeLu, Lub1, Lub2, maltsev, mastroianni, minami, molchanov, nakanoBeta, simonClock, sel, totikUniversality, valko-virag-noise} is a small subset of relevant references). While results in the multidimensional setting were obtained predominantly for random operators, in the one-dimensional case both deterministic and random operators were studied.

Particularly interesting in this context is the problem of understanding how the asymptotics of level spacings are connected to continuity properties of the spectral measures. Known results indicate a certain rough correspondence between asymptotic repulsion and continuity of the spectral measures. Perhaps the most studied case is that of the localized regime in the Anderson model, where it has been shown under various conditions that the rescaled finite-volume eigenvalue process converges to a Poisson process on the line (see, e.g., \cite{gk, minami, molchanov}). Results for one-dimensional operators with random decaying potentials \cite{bfs, kotani-nakano, kritchevski, nakanoBeta} seem to indicate a pattern whereby greater continuity of the spectral measure corresponds to greater repulsion. Deterministic results establishing asymptotic regular spacing for absolutely continuous measures \cite{als, findley, LaSic, LeLu, Lub1, Lub2, maltsev, mastroianni, simonClock, sel, totikUniversality}  work in the same vein.

Our aim in the present paper is to show that the situation is more subtle than what may be thought in light of the discussion above. We shall present a family of half-line Schr\"odinger operators with purely singular continuous spectrum on the positive half-line, whose finite-volume eigenvalues display clock asymptotic behavior (see Definition \ref{def:clock} below), which is a very strong form of repulsion. This shows that strong repulsion should not be associated only with absolutely continuous spectrum.

For discrete Schr\"odinger operators, an analogous result was obtained by one of us \cite{brSine} in the context of studying conditions on measures guaranteeing universal behavior of the associated Christoffel-Darboux (CD) kernel. In particular, in \cite{brSine} clock spacing is a consequence of the convergence of the CD kernel to the sine kernel. We shall obtain our result here by exploiting the analogy between the discrete and continuous case.

To fix terminology and notation, a Schr\"odinger operator acting on $\mathbb{R}^{+}$ is an operator of the form
\beq \no
H=\Delta+V
\eeq
where $\Delta=-\frac{\textrm{d}^{2}}{\textrm{d} x^{2}}$ denotes the Laplacian,
and the operator defined by multiplication by $V\left(x\right)$ is the potential (we shall soon specify conditions on $V$).
We will assume Neumann boundary conditions throughout, i.e.\
\beq \no
u\left(0\right)=1\,\,\,\mbox{and}\,\,\, u'\left(0\right)=\frac{\textrm{d}}{\textrm{d} x}u\left(x\right)|_{x=0}=0.
\eeq

By the spectral theorem there exists a unique measure, $\mu$, for which $H$ is unitarily equivalent
to the operator of multiplication by $x$ on $L^{2}\left(\mathbb{R},\textrm{d}\mu\left(x\right)\right)$.
We call $\mu$ the spectral measure associated with the operator
$H$.

Given $H$ as above, we can restrict it to intervals $\left[0,L\right]$ and consider the operator
\beq \no
H^{L}=\Delta+V|_{\left[0,L\right]}
\eeq
acting on $L^{2}\left(0,L\right)$, this time with Neumann boundary
conditions at $L$ as well as at $0$, i.e., with the \textit{additional}
condition
\beq \no
u'\left(L\right)=0.
\eeq

The spectrum of $H^{L}$ is a discrete set of eigenvalues $\left\{ \xi_{j}^{L}\right\} $. We are interested in the connection between properties of $\mu$ and asymptotic properties (as $L \rightarrow \infty$) of the spacings between the $\xi_j^L$'s. The asymptotic density of these eigenvalues is measured by the \emph{density of states measure}, $\nu$, defined as the weak limit (if it exists) of  the normalized eigenvalue counting measure. Namely, letting
\begin{equation} \label{eq:Density_of_states_definition}
\nu_{L}=\frac{1}{L}\sum_{j}\delta_{\xi_{j}^{L}},
\end{equation}
$\nu=\mbox{w\ensuremath{-\mbox{lim}}}_{L\rightarrow\infty}\nu_{L}$ (if it exists). Assuming $\nu$ exists and is absolutely continuous w.r.t.\ Lebesgue measure, we write $\textrm{d} \nu(\xi)=\rho(\xi) \textrm{d}\xi$. We call $\rho$ the \emph{density of states}. For the free Schr\"odinger operator ($H_0=\Delta$)
\begin{equation} \label{eq:Density_Of_States}
\rho\left(\xi\right)=\frac{1}{2\pi}\xi^{-\frac{1}{2}}
\end{equation}
(see Example 8.1 in Section 2.8 of \cite{BerezShub}).
\begin{definition} \label{def:clock}
Let $I\subseteq\mathbb{R}^{+}$ be a closed interval, and let $\xi^{*}\in I$.
For each $L>0$, reenumerate the eigenvalues $\{\xi_j^L\}$ around $\xi^{*}$ as follows:
\beq \no
...<\xi_{-1}^{L}\left(\xi^{*}\right)<\xi^{*}\leq\xi_{0}^{L}\left(\xi^{*}\right)<\xi_{1}^{L}\left(\xi^{*}\right)<...
\eeq
Following \cite{als}, we say there is \emph{strong clock behavior} at $\xi^{*}$, if the density of states exists
and for each fixed $n$,
\beq \label{eq:strong-clock}
\underset{L\rightarrow\infty}{\lim}\, L\left(\xi_{n+1}^{L}\left(\xi^{*}\right)-\xi_{n}^{L}\left(\xi^{*}\right)\right)\rho\left(\xi^{*}\right)=1
\eeq
We say there is \emph{uniform clock behavior} on the interval  $I\subseteq\mathbb{R}^{+}$,
if the above limit is uniform on $I$ for each fixed $n$.
\end{definition}

\begin{remark}
Originally, the concept was defined for operators arising in the theory of orthogonal
polynomials on the unit circle \cite{simonClock}. On the unit circle, the equally-spaced
eigenvalues appear as marks on a clock -- hence the name.
\end{remark}

\begin{definition}
If $u\left(\xi,x\right)$ is the unique solution of the equation
\beq \label{eq:ef_equation}
-\frac{\textrm{d}^{2}}{\textrm{d} x^{2}}u\left(\xi,x\right)+V\left(x\right)u\left(\xi,x\right)=\xi\cdot u\left(\xi,x\right)
\eeq
with Neumann boundary conditions, the \emph{continuous Christoffel-Darboux
(CD) kernel} at $L$ is
\begin{equation} \label{eq:CD-Kernel-Definition}
S_{L}\left(\xi,\zeta\right)=\int_{0}^{L}u\left(\xi,r\right)u\left(\zeta,r\right)dr
\end{equation}
\end{definition}

This object, introduced in \cite{maltsev}, is the continuous analog of the classical CD
kernel. The CD kernel arises naturally in the study of orthogonal polynomials and
has a wide range of applications (see \cite{simonCD}
for a survey). In particular, the phenomenon of \emph{universality} in random matrix theory is intimately connected with the fact that in many cases the rescaled CD kernel converges to the sine kernel (\cite{deift}).

The significance of this, in our case, lies in the fact that
if $u'\left(\xi,L\right)=0$, then $u'\left(\zeta,L\right)=0$ iff $S_{L}\left(\xi,\zeta\right)=0$. Since the zeros of $u'(\cdot,L)$ are the eigenvalues of $H^L$, this means there is a connection between the asymptotic properties of $S_{L}\left(\xi+\frac{a}{L},\xi+\frac{b}{L}\right)$,
and the small-scale behavior of the $\xi_j^L$'s
around $\xi$, as $L\longrightarrow\infty$.

We shall prove uniform clock behavior for the operators under consideration by showing that the associated CD kernel satisfies
\begin{equation} \label{eq:SineKernelAsymptotics}
\frac{S_{L}\left(\xi+\frac{a}{L},\xi+\frac{b}{L}\right)}{S_{L}\left(\xi,\xi\right)}\underset{L\rightarrow\infty}{\longrightarrow}\frac{\sin\left(\pi\cdot\rho\left(\xi\right)\left(b-a\right)\right)}{\pi\cdot\rho\left(\xi\right)\left(b-a\right)}
\end{equation}
uniformly for $\xi$ in compact subsets of $\mathbb{R}^+$ and $a,b$ in compact subsets of the strip $|\textrm{Im}z| \leq 1$.
The fact that this, together with existence of the density of states (which needs to be established separately), implies uniform clock behavior in the discrete case is known as the Freud-Levin Theorem \cite{freud, LeLu, simonCD}. The proof given in \cite[Section 23]{simonCD} translates directly to the continuum case and so

\begin{proposition} \label{thm:USKA-implies-clock}
For an operator $H=\Delta+V$ on $L^{2}\left(\mathbb{R}^{+}\right)$,
assume that \eqref{eq:SineKernelAsymptotics} holds uniformly for $\xi$ in compact subsets of $\mathbb{R}^+$ and $a,b$ in compact subsets of the strip $|\textrm{Im}z| \leq 1$. Assume moreover that the density of states for $H$ exists and is absolutely continuous with respect to Lebesgue measure. Then uniform clock behavior follows on any compact interval $I\subseteq\left(0,\infty\right)$.
\end{proposition}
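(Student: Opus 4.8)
The plan is to adapt the Freud--Levin argument (as presented in \cite[Section 23]{simonCD}) to the continuum setting. The starting point is the standard observation already recorded in the excerpt: for fixed $L$, the eigenvalues $\{\xi_j^L\}$ of $H^L$ are exactly the zeros of $u'(\cdot,L)$, and when $\xi$ is an eigenvalue, $\zeta$ is an eigenvalue if and only if $S_L(\xi,\zeta)=0$. Thus the rescaled eigenvalue spacings around a point $\xi^*$ are governed by the zeros (in the variable $b$) of the function $b\mapsto S_L\!\left(\xi^*+\tfrac{a_L}{L},\,\xi^*+\tfrac{b}{L}\right)$, where $a_L$ is chosen so that $\xi^*+\tfrac{a_L}{L}$ is the eigenvalue $\xi_0^L(\xi^*)$ (so $a_L$ stays bounded once we know eigenvalues near $\xi^*$ are $O(1/L)$-dense, which follows from existence of a nonvanishing density of states).

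First I would fix $\xi^*$ in a compact interval $I\subseteq(0,\infty)$ and set $g_L(a,b) = S_L\!\left(\xi^*+\tfrac{a}{L},\xi^*+\tfrac{b}{L}\right)/S_L(\xi^*,\xi^*)$. Hypothesis \eqref{eq:SineKernelAsymptotics} says $g_L(a,b)\to \operatorname{sinc}\!\big(\pi\rho(\xi^*)(b-a)\big)$ uniformly for $a,b$ in compact subsets of the strip $|\operatorname{Im} z|\le 1$, and in particular uniformly for $a,b$ real and bounded. The key analytic input I would extract is a normal-families / Hurwitz argument: for each fixed real $a$, the functions $b\mapsto g_L(a,b)$ are entire (the CD kernel is a finite integral of a joint solution that is entire in the spectral parameter) and, by the uniform convergence on the strip together with a Vitali/Montel argument, the convergence to $\operatorname{sinc}\!\big(\pi\rho(\xi^*)(b-a)\big)$ holds locally uniformly in $b$ as an analytic function. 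By Hurwitz's theorem, the zeros of $g_L(a,\cdot)$ converge to the zeros of the limit, which are precisely $b = a + \tfrac{k}{\rho(\xi^*)}$, $k\in\mathbb{Z}\setminus\{0\}$.

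Next I would turn this zero-convergence statement into the clock statement \eqref{eq:strong-clock}. Take $a=a_L$ (bounded, by the density-of-states input) so that $b=a_L$ is itself a zero of $g_L(a_L,\cdot)$ corresponding to $\xi_0^L(\xi^*)$. Then the zeros of $g_L(a_L,\cdot)$ are exactly the rescaled eigenvalues $b_n^L := L(\xi_n^L(\xi^*)-\xi^*)$, and Hurwitz gives $b_n^L - a_L \to n/\rho(\xi^*)$ for each fixed $n$; subtracting consecutive indices yields $b_{n+1}^L - b_n^L \to 1/\rho(\xi^*)$, which is exactly $L(\xi_{n+1}^L(\xi^*)-\xi_n^L(\xi^*))\rho(\xi^*)\to 1$. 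For the \emph{uniform} version over $\xi^*\in I$, I would note that all the ingredients are uniform in $\xi^*$: the hypothesis \eqref{eq:SineKernelAsymptotics} is assumed uniform over compact $\xi$-sets, $\rho$ is continuous and bounded away from $0$ and $\infty$ on $I$, and the Hurwitz/Vitali estimate can be made quantitative (e.g.\ via an argument-principle contour integral $\frac{1}{2\pi i}\oint \frac{g_L'(a,b)}{g_L(a,b)}\,db$ over a small circle around each limiting zero, whose value is pinned to $1$ for $L$ large) with constants depending only on $I$ and the modulus of convergence in \eqref{eq:SineKernelAsymptotics}.

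The main obstacle I expect is the passage from uniform convergence \emph{on the real line / strip} to control of the \emph{zeros}, i.e.\ justifying the Hurwitz step with uniformity in $\xi^*$: one must rule out zeros of $g_L(a,\cdot)$ escaping to, or spuriously appearing near, a limiting clock point, and do so with bounds independent of $\xi^*\in I$. This is exactly where having \eqref{eq:SineKernelAsymptotics} on the full strip $|\operatorname{Im} z|\le 1$ (rather than just for real arguments) is essential: it lets one estimate $g_L(a,b)$ from below on small circles in $\mathbb{C}$ around the nonzero zeros of $\operatorname{sinc}$ (where $\operatorname{sinc}$ is bounded away from $0$), and apply Rouché/the argument principle uniformly. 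A secondary, more bookkeeping-type point is confirming that $a_L$ is bounded uniformly in $\xi^*$, which I would get by first applying the already-established spacing-to-limit argument with the fixed reference $a=0$ to see that some eigenvalue lies within $O(1/\rho(\xi^*))$ of $\xi^*$ in rescaled coordinates. Once these uniformities are in hand, the identity $S_L(\xi,\xi)>0$ (so dividing is legitimate) and the fact that the proof in \cite[Section 23]{simonCD} is formulated purely in terms of the CD kernel and its zeros — with no feature specific to polynomials — give the continuum result essentially verbatim.
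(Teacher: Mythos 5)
Your proposal follows essentially the same Freud--Levin route the paper invokes from \cite[Section 23]{simonCD}: pick the reference eigenvalue $\xi_0^L(\xi^*)=\xi^*+a_L/L$, observe that zeros in $b$ of $S_L\bigl(\xi^*+\tfrac{a_L}{L},\xi^*+\tfrac{b}{L}\bigr)$ recover the remaining eigenvalues, and pass the uniform sine-kernel convergence on a complex strip through Hurwitz/Rouch\'e to locate them; the paper simply asserts that Simon's argument translates verbatim to the continuum. One small slip to correct: $b=a_L$ is \emph{not} a zero of $g_L(a_L,\cdot)$ --- the diagonal $S_L(\xi,\xi)=\int_0^L u(\xi,r)^2\,dr$ is strictly positive --- rather, the zeros of $g_L(a_L,\cdot)$ give exactly the eigenvalues $\xi_n^L(\xi^*)$ with $n\neq 0$, which is all your spacing computation actually uses, so the conclusion is unaffected.
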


In the case of discrete CD kernels \eqref{eq:SineKernelAsymptotics} has been proven under a wide range of conditions.
However, except for \cite{brSine} (and this work), it was
always assumed that the spectral measure was absolutely continuous in a neighborhood
of the point under consideration. That \eqref{eq:SineKernelAsymptotics} implies clock behavior in
the discrete case was discovered by Freud \cite{freud},
and rediscovered by Levin and Lubinsky \cite{LeLu}.
Additional results were obtained by Lubinsky in \cite{Lub1, Lub2},
and in greater generality, by Simon, \cite{sel}, Findley \cite{findley}, and Totik \cite{totikUniversality}. Avila, Last and Simon,
in \cite{als}, proved \eqref{eq:SineKernelAsymptotics} and clock behavior for ergodic
Jacobi matrices with a.c.\ spectrum.

To the best of our knowledge, Maltsev's paper \cite{maltsev} is the only
prior work dealing with \eqref{eq:SineKernelAsymptotics} in the continuous case, where it was used to prove clock asymptotics for eigenvalues of perturbed periodic Schr\"odinger operators.

We are now ready to take a closer look at the potentials
we consider in this work. Given a sequence of real numbers, $\left\{ \lambda_{n}\right\} _{n=1}^{\infty}$
such that $\lambda_{n}\longrightarrow0$, a sequence of positive numbers
$\left\{ N_{n}\right\} _{n=1}^{\infty}$ such that $\frac{N_{n}}{N_{n+1}}\longrightarrow0$,
and a bounded, non-negative, compactly supported function $W\left(x\right)$
(which we think of as a recurring perturbation of varying amplitude,
where the amplitudes are determined by the $\lambda_{n}$), we define
a so-called Pearson potential (see \cite{kls}),
\begin{equation} \label{eq:Pearson_Potential}
V\left(x\right)=\sum_{n=1}^{\infty}\lambda_{n}W\left(x-N_{n}\right)
\end{equation}
Thus, if we fix boundary conditions as above, there is an associated
self-adjoint Schr\"odinger operator,
\begin{equation}
H=\Delta+V.\label{eq:Schroedinger_Operator}
\end{equation}

Sparse potentials were first introduced by Pearson in \cite{pearson}
in 1978, in the construction of the first explicit examples of Schr\"odinger
operators exhibiting purely singular continuous spectrum. The following theorem of Kiselev, Last and Simon
extends the original Pearson result:
\begin{theorem}[Theorem 1.6 of \cite{kls}] \label{thm:Kiselev_Last_Simon_Thm}
Let $V$ be a Pearson potential.
If $\sum_{n=1}^{\infty}\lambda_{n}^{2}<\infty$ $($resp.\ $\sum_{n=1}^{\infty}\lambda_{n}^{2}=\infty$$)$,
the spectrum of the operator $\Delta+V$ is purely absolutely continuous
on $\mathbb{R}^{+}$ $($resp.\ purely singular continuous$)$, for
any boundary condition at $0$.
\end{theorem}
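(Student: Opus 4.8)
The plan is to reduce the eigenfunction equation \eqref{eq:ef_equation} to a discrete dynamical system via modified Pr\"ufer (EFGP) variables and to track the Pr\"ufer radius across the successive bumps of $V$; this is the continuum counterpart of the sparse-potential analysis in \cite{kls}. Fix $\xi=k^{2}$ with $k>0$, let $u(\xi,\cdot)$ solve \eqref{eq:ef_equation}, and write $u=R\sin\theta$, $u'=kR\cos\theta$. A short computation gives
\beq \no
(\log R)'=\frac{V}{2k}\sin 2\theta,\qquad \theta'=k-\frac{V}{k}\sin^{2}\theta .
\eeq
Since $W$ is bounded with compact support and $V$ vanishes between bumps, $R$ is constant on each interval separating $\supp W(\cdot-N_{n})$ from $\supp W(\cdot-N_{n+1})$, while there $\theta$ increases by $k$ times the length of that interval. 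Writing $\theta_{n}:=\theta(N_{n})$, the passage across the $n$-th bump yields
\beq \no
\log R(N_{n}^{+})^{2}-\log R(N_{n}^{-})^{2}=\lambda_{n}\,f(k,\theta_{n})+\lambda_{n}^{2}\,g(k,\theta_{n})+\Oh(\lambda_{n}^{3}),
\eeq
where $f(k,\cdot),g(k,\cdot)$ are $2\pi$-periodic, $f(k,\cdot)$ has mean zero over a period, and the mean $c(k)$ of $g(k,\cdot)$ is nonnegative and strictly positive for a.e.\ $k$, of size $\asymp\lambda_n^2$ (reflecting that a generic non-negative bump expands the transfer matrix), staying bounded away from $0$ and $\infty$ for $k$ in compacta of $(0,\infty)$. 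Iterating, the behaviour of $R(N_{n})$ is governed by the competition between the oscillatory sum $\sum_{m\le n}\lambda_{m}f(k,\theta_{m})$ and the drift $c(k)\sum_{m\le n}\lambda_{m}^{2}$, the cubic remainder being harmless because $\lambda_{n}\to0$.

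The heart of the proof is a decoupling estimate for the phases, valid for Lebesgue-a.e.\ $k$, which is exactly where the sparseness hypothesis $N_{n}/N_{n+1}\to0$ is used. From the phase equation, $\theta_{n}(k)=kN_{n}+(\text{a correction varying slowly in }k)$, so $k\mapsto f(k,\theta_{m}(k))$ oscillates with frequency $\asymp N_{m}$; since $N_{m}/N_{j}\to0$ for $m<j$, a non-stationary-phase (quantitative Riemann--Lebesgue) bound on $\int f(k,\theta_{m}(k))f(k,\theta_{j}(k))\,dk$ over a compact $k$-interval makes the off-diagonal terms summable, so the second moment in $k$ of $\sum_{m\le n}\lambda_{m}f(k,\theta_{m}(k))$ is $\Oh\big(\sum_{m\le n}\lambda_{m}^{2}\big)$. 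A Chebyshev inequality plus Borel--Cantelli (passing to a subsequence and filling in, using $\lambda_m\to0$) then shows that, for a.e.\ $k$, $\sum_{m\le n}\lambda_{m}f(k,\theta_{m})$ converges when $\sum\lambda_{m}^{2}<\infty$ and is $o\big(\sum_{m\le n}\lambda_{m}^{2}\big)$ when $\sum\lambda_{m}^{2}=\infty$. Hence for a.e.\ $\xi>0$: if $\sum\lambda_{n}^{2}<\infty$ then $\log R(N_{n})^{2}$ converges, so $R$ (being constant between bumps and changing by a bounded factor inside each bump) and the whole solution are bounded above and below; if $\sum\lambda_{n}^{2}=\infty$ then $\log R(N_{n})^{2}=(c(k)+o(1))\sum_{m\le n}\lambda_{m}^{2}\to+\infty$.

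It remains to feed these two dichotomous behaviours into standard spectral criteria, separately for $\xi$ in compacta of $(0,\infty)$. If $\sum\lambda_{n}^{2}<\infty$, then for a.e.\ $\xi>0$ \emph{all} solutions of \eqref{eq:ef_equation} are bounded above and below, so $\int_{0}^{L}|u|^{2}\asymp L$ and no solution is subordinate; by Gilbert--Pearson subordinacy theory / the Last--Simon transfer-matrix criterion, the full-measure set of such $\xi$ lies in the essential support of $\mu_{ac}$, while $\mu_{sc}$ and $\mu_{pp}$ are carried by its (null) complement in $(0,\infty)$, so the spectrum is purely a.c.\ on $\mathbb{R}^{+}$. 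If $\sum\lambda_{n}^{2}=\infty$, then $R(N_{n})\to\infty$ for a.e.\ $\xi>0$; on each long potential-free gap the solution is $R(N_{n}^{+})\sin(k\,\cdot+\phi_{n})$, so $\int_{0}^{\infty}|u|^{2}\gtrsim\sum_{n}(N_{n+1}-N_{n})R(N_{n})^{2}=\infty$, while $\|T(x,0;\xi)\|\asymp R(x)\to\infty$; the transfer-matrix criterion then kills $\mu_{ac}$ on $(0,\infty)$, and $\mu_{pp}((0,\infty))=0$ follows since the solution satisfying the boundary condition at $0$ fails to be $L^{2}$ at $+\infty$ (the exceptional null set being handled by the finer transfer-matrix estimates of \cite{kls,pearson}). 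Finally, wave packets supported deep inside the long gaps between consecutive bumps are Weyl sequences showing $[0,\infty)\subseteq\sigma_{\mathrm{ess}}(H)$, so $\sigma(H)\cap(0,\infty)=(0,\infty)$ and the spectrum there is purely singular continuous.

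The one genuinely delicate step is the decoupling estimate of the second paragraph: extracting, for a.e.\ $k$, control of the fluctuations of $\sum_{m}\lambda_{m}f(k,\theta_{m}(k))$ of the right order $\big(\sum\lambda_{m}^{2}\big)^{1/2}$ out of the lacunarity of $\{N_{n}\}$, and turning a second-moment bound into an almost-everywhere statement in the divergent case. Everything else is routine: the Pr\"ufer computation, the positivity of $c(k)$, and the passage from boundedness/growth of solutions to spectral type via continuum subordinacy theory; the only extra care needed there is uniformity in $k$ as $k\to0^{+}$ and as $k\to\infty$.
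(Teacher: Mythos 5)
The paper does not prove this statement --- it is quoted verbatim from Kiselev--Last--Simon \cite{kls} and invoked as a black box --- so there is no ``paper's own proof'' to compare against. Your sketch does follow the actual KLS strategy: modified Pr\"ufer variables, a decomposition of the jump in $\log R$ across a bump into a mean-zero oscillation of order $\lambda_n$ plus a drift of order $\lambda_n^2$, a.e.-$k$ decoupling of the phases $\theta_n(k)$ via the lacunarity of $\{N_n\}$, and the translation from boundedness versus growth of Pr\"ufer amplitudes to spectral type via subordinacy/transfer-matrix criteria. Two points, however, need correcting.

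First, the drift coefficient is not bounded away from zero on compacta of $(0,\infty)$. One computes $c(k)=\frac{1}{8k^{2}}\lvert\widehat W(2k)\rvert^{2}$, and while $\widehat W(0)=\int W>0$, $\widehat W(2k)$ can vanish at isolated real $k$; so $c(k)>0$ only off a discrete set. This is harmless for the a.e.-$k$ argument, but your assertion of a uniform lower bound is false. Second, and more substantively, the absence of point spectrum on $(0,\infty)$ is both simpler and stronger than you indicate: it holds for \emph{every} $k>0$ and requires neither the a.e.\ decoupling nor any ``finer transfer-matrix estimates.'' From $(\log R)'=\tfrac{V}{2k}\sin 2\theta$ with $R$ constant on the gaps, for each fixed $k>0$ one has $\lvert\log R(N_{n}^{+})-\log R(N_{n}^{-})\rvert\le C_k\lvert\lambda_n\rvert$, hence $R(N_n^{+})\ge R(0)\exp\bigl(-C_k\sum_{m\le n}\lvert\lambda_m\rvert\bigr)$ for every solution; since $\lambda_m\to0$ gives $\sum_{m\le n}\lvert\lambda_m\rvert=o(n)$ while $N_n/N_{n+1}\to0$ forces $N_{n+1}-N_n$ to grow super-exponentially in $n$, the gap contribution $R(N_n^+)^2(N_{n+1}-N_n)\to\infty$ and so $\int_0^\infty\lvert u\rvert^2=\infty$ for every solution and every $k>0$, ruling out eigenvalues in $(0,\infty)$ for any boundary condition, in both regimes. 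This cleanly separates the deterministic ``no point spectrum'' step from the genuinely delicate a.e.-$k$ dichotomy (a.c.\ versus no a.c.), which is where the second-moment/Borel--Cantelli argument you outline actually lives.
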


We are now ready to state our main result.
\begin{theorem} \noindent \label{thm:Main_Result}
Let $W\left(x\right)$ be a smooth,
non-negative function which is supported on $\left[0,1\right]$, and
let $\left\{ \lambda_{n}\right\} _{n=1}^{\infty}$ be a sequence such
that $\lambda_{n}\rightarrow0$. If the sequence $\left\{ N_{n}\right\} _{n=1}^{\infty}$
is sufficiently sparse, then for the operator $H=\Delta+V$, with
$V$ as defined in $($\ref{eq:Pearson_Potential}$)$, and with Neumann
boundary conditions, \eqref{eq:SineKernelAsymptotics} holds uniformly for $\xi$ in compact subsets of $\bbR^+$ and $a,b$ in compact subsets of the strip $|\textrm{Im}z| \leq 1$.
\end{theorem}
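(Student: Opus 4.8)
The plan is to use the Christoffel--Darboux identity to reduce the problem to the behaviour of the solution $u(\xi,\cdot)$ at the single point $x=L$. Multiplying \eqref{eq:ef_equation} for $u(\xi,\cdot)$ by $u(\zeta,\cdot)$, subtracting the same equation with $\xi$ and $\zeta$ interchanged, and integrating over $[0,L]$, the Neumann condition $u'(\xi,0)=u'(\zeta,0)=0$ removes the boundary contribution at $0$ and gives
\[
S_L(\xi,\zeta)=\frac{u'(\xi,L)\,u(\zeta,L)-u(\xi,L)\,u'(\zeta,L)}{\zeta-\xi}.
\]
In modified Pr\"ufer variables, $u(\xi,x)=R(\xi,x)\cos\theta(\xi,x)$ and $u'(\xi,x)=-\sqrt\xi\,R(\xi,x)\sin\theta(\xi,x)$, one has $R(\xi,0)=1$, $\theta(\xi,0)=0$, $\theta'=\sqrt\xi-\frac{V}{\sqrt\xi}\cos^2\theta$, $\frac{R'}{R}=-\frac{V}{2\sqrt\xi}\sin2\theta$, and $R\equiv1$, $\theta=\sqrt\xi\,x$ when $V\equiv0$. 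For $\xi=\xi_0+a/L$ and $\zeta=\xi_0+b/L$ with $\xi_0\in\mathbb R^+$ fixed, the numerator equals $R(\xi,L)R(\zeta,L)\bigl[\sqrt{\xi_0}\sin\!\big(\theta(\zeta,L)-\theta(\xi,L)\big)+O(1/L)\bigr]$, so the theorem reduces to three statements, uniform for $\xi_0$ in compacts of $\mathbb R^+$ and $a,b$ in compacts of the strip: (i) $R(\xi_0+a/L,L)/R(\xi_0,L)\to1$; (ii) $\theta(\xi_0+b/L,L)-\theta(\xi_0+a/L,L)\to\frac{b-a}{2\sqrt{\xi_0}}=\pi\rho(\xi_0)(b-a)$, with $\rho$ as in \eqref{eq:Density_Of_States}; (iii) $S_L(\xi_0,\xi_0)=\frac{1}{2}R(\xi_0,L)^2L\,(1+o(1))$. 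Given these, the product-to-sum identity collapses $S_L(\xi,\zeta)/S_L(\xi_0,\xi_0)$ to $2\sqrt{\xi_0}\sin(\pi\rho(\xi_0)(b-a))/(b-a)=\sin(\pi\rho(\xi_0)(b-a))/(\pi\rho(\xi_0)(b-a))$ (using $\pi\rho(\xi_0)=1/(2\sqrt{\xi_0})$ from \eqref{eq:Density_Of_States}), which is the right-hand side of \eqref{eq:SineKernelAsymptotics}.

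To obtain (i)--(iii) I would exploit the sparseness of $V$ in \eqref{eq:Pearson_Potential}. Fix $L$ and let $n=n(L)$ with $N_n\le L<N_{n+1}$ (if $N_n\le L\le N_n+1$, this is an $O(1)$ perturbation of the case $L=N_n$). Since $V$ vanishes on $(N_{n-1}+1,N_n)$ and on $(N_n+1,L)$, there $R(\xi,\cdot)$ is constant and $\theta(\xi,\cdot)$ is affine of slope $\sqrt\xi$, while across the single barrier at $N_n$ the transfer matrix differs from the free one by $O(\lambda_n)$, uniformly on compact energy sets; hence, with $T:=L-N_{n-1}-1$,
\[
\theta(\xi,L)=\theta(\xi,N_{n-1}+1)+\sqrt\xi\,T+O(\lambda_n),\qquad R(\xi,L)=R(\xi,N_{n-1}+1)\bigl(1+O(\lambda_n)\bigr).
\]
If $\{N_n\}$ is chosen so that $\sum_{m\le n}|\lambda_m|$ grows more slowly than $\log N_n$ and $N_{n-1}\le\sqrt{N_n}$, then $R(\xi,x)^{\pm1}\le\exp\!\big(\frac{1}{2}\|W\|_{L^1}\xi_0^{-1/2}\sum_{m\le n(x)}|\lambda_m|\big)=x^{o(1)}$ uniformly on compact energy sets, and $N_{n(L)-1}\le\sqrt L$, so $T=L(1+o(1))$. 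Consequently $\int_0^L u(\xi_0,\cdot)^2=R(\xi_0,N_{n-1}+1)^2\int_0^T\cos^2(\sqrt{\xi_0}s+\theta(\xi_0,N_{n-1}+1))\,ds+O(\sqrt L\,L^{o(1)})=\frac{1}{2}R(\xi_0,L)^2 L(1+o(1))$, which is (iii); and since $\lambda_n\to0$ and $(\sqrt{\xi_0+b/L}-\sqrt{\xi_0+a/L})T\to\frac{b-a}{2\sqrt{\xi_0}}$, the displayed relations reduce (i) and (ii) to showing $R(\xi_0+a/L,x)/R(\xi_0,x)\to1$ and $\theta(\xi_0+a/L,x)-\theta(\xi_0,x)\to0$ at the point $x=N_{n-1}+1=O(\sqrt L)$.

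The crux, and the main obstacle, is this last comparison of $R$ and $\theta$ at energies $O(1/L)$ apart. From the integral forms $\theta(\xi,x)=\sqrt\xi\,x-\frac{1}{\sqrt\xi}\int_0^x V\cos^2\theta$ and $\log R(\xi,x)=-\frac{1}{2\sqrt\xi}\int_0^x V\sin2\theta$, the ``free'' part of $\theta(\xi_0+a/L,x)-\theta(\xi_0,x)$ is $(\sqrt{\xi_0+a/L}-\sqrt{\xi_0})x=O(|a|\sqrt L/L)\to0$ for $x=O(\sqrt L)$, while the potential part is a sum over the $\le n(L)$ barriers of terms of size $O(|\lambda_m|)$ times the discrepancy accumulated up to $N_m$. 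A Gr\"onwall estimate for $\theta(\xi_0+a/L,\cdot)-\theta(\xi_0,\cdot)$ using $\int_0^x|V|=O\big(\sum_{m\le n(x)}|\lambda_m|\big)$, together with a further sparseness requirement (say $N_{n+1}/N_n\ge 2|\lambda_n|/|\lambda_{n+1}|$, so that each barrier-sum is dominated by its last term), gives $\sup_{x\le L}\big|\theta(\xi_0+a/L,x)-\theta(\xi_0,x)-(\sqrt{\xi_0+a/L}-\sqrt{\xi_0})x\big|\le|a|\,|\lambda_{n(L)}|\,L^{o(1)}\to0$, and the same for $\log R$; in particular both tend to $0$ at $x=O(\sqrt L)$, completing (i) and (ii). The point of the sparseness is that a careless estimate loses a factor $\exp(C\sum_m|\lambda_m|)$, which diverges precisely in the singular continuous regime $\sum\lambda_m^2=\infty$; the freedom to take $\{N_n\}$ as sparse as we wish --- forcing $\sum_{m\le n(L)}|\lambda_m|=o(\log L)$ and making the barrier-sums telescope --- is what rescues the argument, in analogy with the discrete treatment of \cite{brSine}. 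Finally, each condition on $N_n$ involves only $N_1,\dots,N_{n-1}$, the fixed $W$ and $\{\lambda_m\}$, and a lower bound for the energies, while all constants depend only on $\|W\|_{L^1}$ and that lower bound; so a diagonalization over an exhaustion of $\mathbb R^+$ by compact intervals yields a single sequence $\{N_n\}$ for which everything above holds uniformly, which is the assertion of the theorem.
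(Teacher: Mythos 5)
Your proposal takes a genuinely different route from the paper's, though it exploits the same underlying structure. The paper works with the truncated operators $H^{(\ell)}$ and the variation-of-parameters coefficients $A_1, A_2$: Lemma~\ref{lem:convergence_when_divided_by_Kappa} establishes uniform sine-kernel asymptotics for each truncation, and Lemma~\ref{lem:Weissman_Thm} shows that adding the single barrier at $N_{\ell+1}$ changes the relevant quantities only by $O(\lambda_{\ell+1})$ (times a controllable power of $M_m$); the proof then compares $S_x^{(\ell)}/\kappa_x^{(\ell)}$ to $S_x^{(\ell+1)}/\kappa_x^{(\ell+1)}$ on $[N_{\ell+1},N_{\ell+2}]$, where $S_x=S_x^{(\ell+1)}$. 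You instead work directly in modified Pr\"ufer variables $(R,\theta)$ for the full operator, reduce the problem via the CD formula to the behaviour of $R$ and $\theta$ at $x=L$, and then peel off the last (essentially free, one-barrier) segment $[N_{n-1}+1,L]$, so that the real comparison occurs at $x\approx N_{n-1}+1\le\sqrt L$ where the energy separation $O(1/L)$ gives an extra small factor. Since $R^2=A_1^2/\xi+A_2^2$ these are the same quantities in different coordinates, and both proofs ultimately rest on (a) the last segment being free up to one $O(\lambda_n)$ barrier and (b) the earlier barriers sitting in $[0,\sqrt L]$; but your Gr\"onwall-plus-telescoping estimate for $\theta(\xi_0+a/L,\cdot)-\theta(\xi_0,\cdot)$ is a different and arguably more hands-on mechanism than the paper's one-step comparison of truncated CD kernels. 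The trade-off: the paper's decomposition cleanly isolates the free asymptotics (Lemma~\ref{lem:convergence_when_divided_by_Kappa}) from the perturbative step (Lemma~\ref{lem:Weissman_Thm}) and makes the inductive choice of $N_{\ell+1}$ transparent; your approach avoids introducing the truncated family but asks more of the reader in tracking the Pr\"ufer discrepancy through all the earlier barriers.

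Two points in your sketch need to be addressed to match the full strength of the statement. First, the theorem requires complex $a,b$ with $|\mathrm{Im}\,z|\le1$, and the Pr\"ufer decomposition $u=R\cos\theta$, $u'=-\sqrt\xi R\sin\theta$ with complex $\xi$ makes $R,\theta$ complex; the argument survives because $|\mathrm{Im}(\sqrt{\xi_0+a/L}\,x)|=O(1)$ for $x\le L$ so the trigonometric factors stay bounded (this is precisely what the paper's Lemma on $M_I$ quantifies), but you should say so explicitly. Second, your off-diagonal computation degenerates as $a\to b$, and the paper handles the passage to $|a-b|$ small via analyticity of $b\mapsto S_L(\xi+\frac a L,\xi+\frac b L)$ and the Cauchy formula; you need this (or a direct Taylor argument at $a=b$) to get the stated uniformity on full compacts of the strip. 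Also, the displayed Gr\"onwall bound $\sup_{x\le L}|\cdots|\le|a|\,|\lambda_{n(L)}|\,L^{o(1)}$ is slightly misleading as stated, since $L^{o(1)}=\exp(C\sum_{m\le n}|\lambda_m|)$ is a fixed constant once $n$ is fixed and need not be beaten by $|\lambda_n|$ alone; the estimate really works because after restricting to $x\le N_{n-1}+1\approx\sqrt L$ the inhomogeneous term carries a factor $N_{n-1}/L\le N_{n-1}/N_n$, and it is this ratio (not $|\lambda_n|$) that is made small by the inductive choice of $N_n$. With these clarifications, the argument is sound.
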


\begin{remark}
By `$\left\{ N_{n}\right\} _{n=1}^{\infty}$ is sufficiently sparse'
we mean that $N_{k+1}$ has to be chosen sufficiently large, as a
function of $N_{1},N_{2},\ldots,N_{k}$. In other words, for every
$k\geq1$, there exists a function $\widetilde{N}_{k}\left(N_{1},N_{2},\ldots,N_{k}\right)$,
such that $N_{k+1}\geq\widetilde{N}_{k}\left(N_{1},N_{2},\ldots,N_{k}\right)$.
The functions $\widetilde{N}_{k}$ will depend on $\left\{ \lambda_{n}\right\} _{n=1}^{\infty}$.
In particular, we require that $\frac{N_{k}}{N_{k+1}}\longrightarrow0$.
\end{remark}

\begin{corollary}
There exist operators with purely singular continuous spectrum on $(0,\infty)$, exhibiting
uniform clock behavior on any compact interval $I\subseteq\left(0,\infty\right)$.
\end{corollary}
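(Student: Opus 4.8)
The plan is to obtain the Corollary by assembling the results already available, so that the only real work is producing the density of states. Fix once and for all a smooth, non-negative function $W$ supported on $[0,1]$ (an ordinary bump function), and a sequence of positive reals $\lambda_n\to0$ with $\sum_{n=1}^\infty\lambda_n^2=\infty$ --- for instance $\lambda_n=n^{-1/2}$. Whatever the choice of $\{N_n\}$, the Pearson potential $V=\sum_n\lambda_n W(\cdot-N_n)$ then falls under Theorem \ref{thm:Kiselev_Last_Simon_Thm}, and since $\sum\lambda_n^2=\infty$ the operator $H=\Delta+V$ has purely singular continuous spectrum on $(0,\infty)$ for every boundary condition at $0$, in particular the Neumann one. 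Thus the spectral type is settled, and it remains to arrange uniform clock behavior; for this we invoke Proposition \ref{thm:USKA-implies-clock}, whose hypotheses are \eqref{eq:SineKernelAsymptotics} and the existence of an absolutely continuous density of states.

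The first hypothesis is exactly the conclusion of Theorem \ref{thm:Main_Result}: it holds, uniformly in the stated sense, as soon as $\{N_n\}$ is sufficiently sparse. For the second, I claim that for sufficiently sparse $\{N_n\}$ the density of states exists and equals the free one, $\rho(\xi)=\frac1{2\pi}\xi^{-1/2}$, which is absolutely continuous. I would prove this by Sturm oscillation theory in modified Pr\"ufer variables: writing $u(\xi,x)=R(\xi,x)\cos\theta(\xi,x)$ and $u'(\xi,x)=-\sqrt{\xi}\,R(\xi,x)\sin\theta(\xi,x)$ with $R>0$ and $\theta(\xi,0)=0$, the Neumann condition at $L$ makes the eigenvalues $\xi_j^L$ of $H^L$ exactly the zeros of $\xi\mapsto\sin\theta(\xi,L)$, so that $\#\{j:\xi_j^L\le\xi\}=\pi^{-1}\theta(\xi,L)+O(1)$. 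The Pr\"ufer equation is $\theta'=\sqrt{\xi}-V\cos^2\theta/\sqrt{\xi}$, whence
\beq \no
\bigl|\theta(\xi,L)-\sqrt{\xi}\,L\bigr|\ \le\ \frac1{\sqrt{\xi}}\int_0^L|V(x)|\,dx\ \le\ \frac{\|W\|_1}{\sqrt{\xi}}\sum_{N_n\le L}\lambda_n .
\eeq
Since $N_n/N_{n+1}\to0$ forces $N_{n+1}\ge 2N_n$ for all large $n$, the sequence $\{N_n\}$ grows at least geometrically and $\#\{n:N_n\le L\}=O(\log L)=o(L)$; as the $\lambda_n$ are bounded, the right-hand side above is $o(L)$ uniformly for $\xi$ in compact subsets of $(0,\infty)$. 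Hence $\nu_L([0,\xi])=L^{-1}\#\{j:\xi_j^L\le\xi\}\to\sqrt{\xi}/\pi=\int_0^\xi\frac1{2\pi}t^{-1/2}\,dt$, that is, $\nu_L\to\nu$ weakly with $d\nu(\xi)=\rho(\xi)\,d\xi$.

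Both ``sufficiently sparse'' requirements are of the same form --- for each $k$, $N_{k+1}$ must be taken larger than a threshold depending on $N_1,\dots,N_k$ and on $\{\lambda_n\}$ --- so one simply chooses, at each stage, $N_{k+1}$ larger than the maximum of the two thresholds; this keeps $N_k/N_{k+1}\to0$, so $V$ remains a genuine Pearson potential. With such a choice in place, Theorem \ref{thm:Kiselev_Last_Simon_Thm} gives purely singular continuous spectrum on $(0,\infty)$, Theorem \ref{thm:Main_Result} gives \eqref{eq:SineKernelAsymptotics}, the previous paragraph gives the absolutely continuous density of states, and Proposition \ref{thm:USKA-implies-clock} then yields uniform clock behavior on every compact $I\subseteq(0,\infty)$. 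The only piece requiring an actual argument is the density-of-states computation, and within it the only care needed is in making the estimate uniform and in handling those $L$ that fall inside a bump $[N_n,N_n+1]$; everything else is a direct citation. The genuine substance of the construction is of course Theorem \ref{thm:Main_Result} itself, whose proof rests on the Christoffel-Darboux identity $S_L(\xi,\zeta)=\bigl(u(\xi,L)u'(\zeta,L)-u'(\xi,L)u(\zeta,L)\bigr)/(\xi-\zeta)$ --- the boundary term at $0$ vanishing by the Neumann condition --- together with control of the Pr\"ufer phase and amplitude across the sparse bumps, but that result is granted to us here.
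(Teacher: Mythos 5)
Your proof is correct and follows the same overall structure as the paper's: use Theorem \ref{thm:Kiselev_Last_Simon_Thm} with $\sum\lambda_n^2=\infty$ to fix the spectral type, invoke Theorem \ref{thm:Main_Result} for the sine-kernel asymptotics, establish existence of the free density of states, and then cite Proposition \ref{thm:USKA-implies-clock}. The one place you diverge is the density-of-states step, where the paper merely remarks that the claim follows ``by imitating the proof of \cite{geronimo}'' (a trace/comparison argument for banded matrices), whereas you give a self-contained oscillation-theory argument in modified Pr\"ufer variables. Your Pr\"ufer equation $\theta'=\sqrt{\xi}-V\cos^2\theta/\sqrt{\xi}$ and the resulting bound $|\theta(\xi,L)-\sqrt{\xi}L|\le\xi^{-1/2}\int_0^L|V|$ are correct, and since $N_n/N_{n+1}\to0$ forces eventual geometric growth, $\int_0^L|V|=O(\log L)$, giving $\nu_L([0,\xi])\to\sqrt{\xi}/\pi=\int_0^\xi\rho(t)\,dt$ for each $\xi>0$; together with the uniform boundedness of $\nu_L$ on compacts this yields vague convergence to the free DOS. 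This buys you an explicit, elementary proof of the hypothesis that the paper leaves as a pointer to the literature, and it correctly observes that this part requires no extra sparseness beyond the Pearson condition $N_n/N_{n+1}\to0$ already in force, so only the sparseness from Theorem \ref{thm:Main_Result} constrains the inductive choice of $N_{k+1}$.
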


\begin{proof}
Since $V(x) \rightarrow 0$ as $x \rightarrow \infty$, it is not difficult to show that the density of states for the Pearson operators considered here exists and is equal to the free density of states (e.g., by imitating the proof of \cite{geronimo}). This, together with Proposition \ref{thm:USKA-implies-clock} implies the conclusion.
\end{proof}

In rough terms, the strategy of our proof is as follows. Given $H$, we define a sequence of operators by truncating the potential
at increasing points. We refer to them as the truncated operators.
Like the original operator, $H$, they act on $L^{2}\left(\mathbb{R}^{+}\right)$. Let
\beq \label{eq:Truncated_Op}
H^{\left(\ell\right)}=\Delta+V^{\left(\ell\right)}=\Delta+\sum_{n=1}^{\ell}\lambda_{n}W\left(x-N_{n}\right).
\eeq
These operators are defined using the same boundary conditions as $H$ (and \textit{should not} be confused with
the restricted operators, $H^{L}$). Denote by $u^{\left(\ell\right)}\left(\xi,x\right)$ the unique solution of
the associated eigenfunction equation
\beq \label{eq:Truncated_Ef}
\left(\Delta+V^{\left(\ell\right)}\right)u^{\left(\ell \right)} \left(\xi, x \right)=\xi u^{\left( \ell \right)} \left(\xi,x \right).
\eeq
The associated CD kernel is
\beq \label{eq:Truncated_CD} S_{L}^{\left(\ell\right)}\left(\xi,\zeta\right)=\int_{0}^{L}u^{\left(\ell\right)}\left(\xi,r\right)u^{\left(\ell\right)}\left(\zeta,r\right)dr.
\eeq

Our strategy will be to show that since \eqref{eq:SineKernelAsymptotics} holds for any $H^{\left(\ell\right)}$, if we place the perturbations sparsely enough (i.e.\ the sequence $\left\{ N_{n}\right\} _{n=1}^{\infty}$),
then for sufficiently small $\lambda$ the renormalized CD kernel remains close enough to its sine
kernel limit. Since we want to construct the sequence $\left\{ N_{n}\right\} _{n=1}^{\infty}$ such that  \eqref{eq:SineKernelAsymptotics} holds for all $\xi \in \bbR^{+}$ (and not only in a compact interval), the main challenge will be to control the constants as we increase the size of the interval that we study. After obtaining some preliminary results in Section 2, we prove Theorem \ref{thm:Main_Result} in Section 3.

{\bf Acknowledgments} We would like to thank the referee for a careful reading and many useful suggestions.
%%%%%%%%%%%%%%%%%%%%%%%% Section 2 %%%%%%%%%%%%%%%%%%%%%%%%%%%%

\section{\noindent Preliminaries}

%%%%%%%%%%%%%%%%%%%%%%%%%%%%%%%%%%%%%%%%%%%%%%%%%%%%

We denote by $\Phi\left(\xi,x\right)$ the solution to
\beq \label{eq:free-ev}
-\frac{\textrm{d}^{2}}{\textrm{d} x^{2}}u\left(\xi,x\right)=\xi\cdot u\left(\xi,x\right)
\eeq
with Dirichlet boundary conditions, namely $\Phi\left(\xi,0\right)=0$ and $\Phi'\left(\xi,0\right)=1$.
Similarly, we denote by $\Psi\left(\xi,x\right)$ the Neumann solution,
$\Psi\left(\xi,0\right)=1$ and $\Psi'\left(\xi,0\right)=0$. We define the transfer matrix by
\begin{equation} \label{eq:Free_Fundamental_Matrix}
T^{(0)}_{x,0}\left(\xi\right)=\begin{bmatrix}\Psi\left(\xi,x\right) & \Phi\left(\xi,x\right)\\
\Psi'\left(\xi,x\right) & \Phi'\left(\xi,x\right)
\end{bmatrix}=\begin{bmatrix}\cos\left(\sqrt{\xi}x\right) & \frac{1}{\sqrt{\xi}}\mbox{\ensuremath{\sin}\ensuremath{\ensuremath{\left(\sqrt{\xi}x\right)}}}\\
-\sqrt{\xi}\mbox{\ensuremath{\sin}\ensuremath{\ensuremath{\left(\sqrt{\xi}x\right)}}} & \cos\ensuremath{\left(\sqrt{\xi}x\right)}
\end{bmatrix}
\end{equation}
so that if $u^{(0)}\left(\xi,x\right)$ is some solution to \eqref{eq:free-ev} then
\beq \no
\begin{bmatrix}u^{(0)}\left(\xi,x\right)\\
u^{(0)}\,'\left(\xi,x\right)
\end{bmatrix}=T^{(0)}_{x,0}\left(\xi\right)\begin{bmatrix}u^{(0)}\left(\xi,0\right)\\
u^{(0)}\,'\left(\xi,0\right)
\end{bmatrix}.
\eeq
For $0\leq a\leq b$ we further define $T^{(0)}_{b,a}=T^{(0)}_{b,0}T^{(0)}_{a,0}\,^{-1}$. Note that, for any $0 \leq a \leq b$,
\beq \label{eq:Det_of_transfer_matrix}
\det\left(T^{(0)}_{b,a}\left(\xi\right)\right)=1.
\eeq

Given a Pearson potential as in \eqref{eq:Pearson_Potential} with $W$ smooth, nonnegative, and with support $\subseteq [0,1]$, recall that $u(\xi, x)$ is the unique solution of \eqref{eq:ef_equation} with Neumann boundary conditions. Using the Dirichlet solution, we may define analogously the transfer matrix for $H$ to get
\beq \no
\begin{bmatrix}u\left(\xi,x\right)\\
u\,'\left(\xi,x\right)
\end{bmatrix}=T_{x,0}\left(\xi\right)\begin{bmatrix}u\left(\xi,0\right)\\
u\,'\left(\xi,0\right)
\end{bmatrix}.
\eeq

Our analysis will rely on variation of parameters. Namely we define the functions $A_1(\xi, x)$ and $A_2(\xi, x)$ through
\begin{equation} \label{eq:Definition_of_parameters}
\begin{bmatrix}A_{1}\left(\xi,x\right)\\
A_{2}\left(\xi,x\right)
\end{bmatrix}=T^{(0)}_{x,0}\left(\xi\right)^{-1}\begin{bmatrix}u\left(\xi,x\right)\\
u\,'\left(\xi,x\right)
\end{bmatrix}
\end{equation}
i.e.
\begin{eqnarray*}
u\left(\xi,x\right) & = & A_{1}\left(\xi,x\right)\Phi\left(\xi,x\right)+A_{2}\left(\xi,x\right)\Psi\left(\xi,x\right)\\
u\,'\left(\xi,x\right) & = & A_{1}\left(\xi,x\right)\Phi'\left(\xi,x\right)+A_{2}\left(\xi,x\right)\Psi'\left(\xi,x\right).
\end{eqnarray*}

Analogously, for the truncated operators, $H^{(\ell)}$, and the associated generalized eigenfunctions, $u^{\left( \ell \right)}$, defined in \eqref{eq:Truncated_Op} and \eqref{eq:Truncated_Ef}, we define the associated transfer matrix $T_{x,0}^{\left(\ell\right)}\left(\xi\right)$, which satisfies
\beq \no
\begin{bmatrix}u^{\left(\ell\right)}\left(\xi,x\right)\\
u^{\left(\ell\right)}\,'\left(\xi,x\right)
\end{bmatrix}=T_{x,0}^{\left(\ell\right)}\left(\xi\right)\begin{bmatrix}u^{\left(\ell\right)}\left(\xi,0\right)\\
u^{\left(\ell\right)}\,'\left(\xi,0\right)
\end{bmatrix}.
\eeq
The functions $A_1^{(\ell)}$ and $A_2^{(\ell)}$ are also defined through
\begin{equation} \label{eq:Definition_of_f_l_g_l}
\begin{bmatrix}A_{1}^{\left(\ell\right)}\left(\xi,x\right)\\
A_{2}^{\left(\ell\right)}\left(\xi,x\right)
\end{bmatrix}=T^{(0)}_{x,0}\left(\xi\right)^{-1}\begin{bmatrix}u^{\left(\ell\right)}\left(\xi,x\right)\\
u^{\left(\ell\right)}\,'\left(\xi,x\right)
\end{bmatrix}.
\end{equation}
Note that for $x \geq N_{\ell}+1$, the functions $A_{1}^{\left(\ell\right)},A_{2}^{\left(\ell\right)}$ are constant
in $x$.
For later reference we note that, by \cite[Chapter 1, Theorem 8.4]{Codd1}, for any $x$ the functions $u(\xi,x),u^{(\ell)}(\xi,x)$ and therefore also $A_1(\xi,x), A_2(\xi,x),A_1^{(\ell)}(\xi,x),A_2^{(\ell)}(\xi,x)$ are entire functions of $\xi$.

Let $I_{m}=\left[\frac{1}{m},m\right]$. The following is clear:
\begin{lemma} \label{lem:Bound_For_Real_Arguments}
There is a constant $\hat{C}\in\mathbb{R}$,
such that for any $\xi\in \left[a,b\right]\subseteq\mathbb{R}^{+}$,
for any $x\in\mathbb{R}$
\[
\left|\left|T^{(0)}_{x,0}\left(\xi\right)\right|\right|,\left|\left|T^{(0)}_{x,0}\,^{-1}\left(\xi\right)\right|\right|\leq\hat{C}\cdot max\left\{ \sqrt{b},\frac{1}{\sqrt{a}}\right\} .
\]
Thus, for any interval $I_{m}$
\[
\left|\left|T^{(0)}_{x,0}\left(\xi\right)\right|\right|,\left|\left|T^{(0)}_{x,0}\,^{-1}\left(\xi\right)\right|\right|\leq\hat{C}\sqrt{m}.
\]
\end{lemma}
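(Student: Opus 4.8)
The plan is to read the bound directly off the explicit formula for $T^{(0)}_{x,0}(\xi)$ in \eqref{eq:Free_Fundamental_Matrix}. Since $\abs{\cos(\sqrt{\xi}x)}\leq 1$ and $\abs{\sin(\sqrt{\xi}x)}\leq 1$ for all real $x$ and all $\xi>0$, the four entries of $T^{(0)}_{x,0}(\xi)$ are bounded in absolute value by $1$, $\tfrac{1}{\sqrt{\xi}}$, $\sqrt{\xi}$, and $1$ respectively. For $\xi\in[a,b]$ we have $\tfrac{1}{\sqrt{\xi}}\leq\tfrac{1}{\sqrt{a}}$ and $\sqrt{\xi}\leq\sqrt{b}$, so every entry is bounded by $\max\{1,\sqrt{b},\tfrac{1}{\sqrt{a}}\}$. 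A small observation removes the $1$: since $a\leq b$, either $a\leq 1$, in which case $\tfrac{1}{\sqrt{a}}\geq 1$, or $a>1$, in which case $b>1$ and $\sqrt{b}>1$; hence $\max\{1,\sqrt{b},\tfrac{1}{\sqrt{a}}\}=\max\{\sqrt{b},\tfrac{1}{\sqrt{a}}\}$. Because all matrix norms on $2\times 2$ matrices are equivalent, there is an absolute constant $\hat{C}$ (e.g.\ from $\|M\|\leq \hat C\max_{i,j}\abs{M_{ij}}$ for the operator norm) with $\|T^{(0)}_{x,0}(\xi)\|\leq\hat{C}\max\{\sqrt{b},\tfrac{1}{\sqrt{a}}\}$.

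For the inverse I would use \eqref{eq:Det_of_transfer_matrix}: since $\det T^{(0)}_{x,0}(\xi)=1$, the cofactor formula gives
\[
T^{(0)}_{x,0}(\xi)^{-1}=\begin{bmatrix}\Phi'(\xi,x) & -\Phi(\xi,x)\\ -\Psi'(\xi,x) & \Psi(\xi,x)\end{bmatrix},
\]
whose entries have exactly the same absolute values as those of $T^{(0)}_{x,0}(\xi)$ (equivalently, $T^{(0)}_{x,0}(\xi)^{-1}=T^{(0)}_{0,x}(\xi)$, which one can also read off \eqref{eq:Free_Fundamental_Matrix} by replacing $x$ with $-x$). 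Hence the same constant $\hat C$ works for $\|T^{(0)}_{x,0}(\xi)^{-1}\|$.

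The final assertion about $I_m=[\tfrac1m,m]$ is then immediate: taking $a=\tfrac1m$ and $b=m$ gives $\sqrt{b}=\tfrac{1}{\sqrt{a}}=\sqrt{m}$, so $\max\{\sqrt{b},\tfrac{1}{\sqrt{a}}\}=\sqrt{m}$ and the bound becomes $\hat{C}\sqrt{m}$. There is no real obstacle here — the statement is ``clear'' precisely because the free transfer matrix is explicit; the only mild care needed is the elementary remark that $\max\{\sqrt{b},\tfrac{1}{\sqrt{a}}\}\geq 1$ so that the constant $1$ coming from the diagonal entries can be absorbed, and the use of \eqref{eq:Det_of_transfer_matrix} to handle the inverse without a separate computation.
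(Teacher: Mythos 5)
Your proof is correct and is exactly the elementary computation the paper intends: the paper presents this lemma with no proof at all, prefacing it only with the remark that it is clear, and you have supplied the details that make it clear. Reading the entries off the explicit formula \eqref{eq:Free_Fundamental_Matrix}, bounding them by $1$, $1/\sqrt{\xi}$, and $\sqrt{\xi}$, handling the inverse via the determinant-one cofactor formula (equivalently $x\mapsto -x$), and noting that $\max\{\sqrt{b},1/\sqrt{a}\}\geq 1$ so the diagonal entries are absorbed, is precisely the intended argument; there is nothing to add or correct.
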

We need to extend this bound slightly to the complex plain, so that it holds on strips around $\bbR^{+}$.

\begin{lemma}
For any closed interval $I=\left[a,b\right]\subseteq\mathbb{R}^{+}$,
there exists a constant $M_{I}\in\mathbb{R}$, such that for any $\xi\in I$,
$0<x$ and $t\in\mathbb{R}$ with $\left|t\right|\leq1$,
\[
\left|\left|T^{(0)}_{x,0}\left(\xi+\frac{it}{x}\right)\right|\right|,\left|\left|T^{(0)}_{x,0}\,^{-1}\left(\xi+\frac{it}{x}\right)\right|\right|\leq M_{I}
\]
\end{lemma}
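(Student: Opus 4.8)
\emph{Proof proposal.} The plan is to work directly from the explicit formula \eqref{eq:Free_Fundamental_Matrix}. Fix $I=[a,b]\subseteq\mathbb{R}^{+}$, and for $\xi\in I$, $x>0$ and $|t|\leq 1$ put $\zeta=\xi+\frac{it}{x}$, $k=\sqrt{\zeta}$ (principal branch, so $\Real k>0$ since $\Real\zeta=\xi>0$), and $w=kx$. Every entry of $T^{(0)}_{x,0}(\zeta)$ is one of $\cos w$, $\frac{1}{k}\sin w$, $-k\sin w$; and since $\det T^{(0)}_{x,0}(\zeta)=1$ (see \eqref{eq:Det_of_transfer_matrix}), the inverse equals $\begin{bmatrix}\cos w & -\frac{1}{k}\sin w\\ k\sin w & \cos w\end{bmatrix}$, so its entries are the same four quantities up to sign. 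Hence it suffices to bound $|\cos w|$, $|\sin w|$, $\left|\frac{1}{k}\sin w\right|$ and $|k\sin w|$ by a constant depending only on $a,b$.

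The engine of the estimate is that $\Ima w$ is bounded uniformly. Writing $k=\alpha+i\beta$ and squaring gives $2\alpha\beta=\Ima\zeta=t/x$, so $\Ima w=\beta x=\frac{t}{2\alpha}=\frac{t}{2\Real k}$. Writing $\zeta=|\zeta|e^{i\theta}$ with $|\theta|<\pi/2$, one has $\Real k=\sqrt{|\zeta|}\cos(\theta/2)\geq\sqrt{|\zeta|\cos\theta}=\sqrt{\Real\zeta}=\sqrt{\xi}$, since $\cos^{2}(\theta/2)-\cos\theta=\sin^{2}(\theta/2)\geq0$. Thus $\Real k\geq\sqrt{a}$, whence $|\Ima w|\leq\frac{|t|}{2\sqrt{a}}\leq\frac{1}{2\sqrt{a}}$; also $|k|=|\zeta|^{1/2}\geq(\Real\zeta)^{1/2}\geq\sqrt{a}$, and $\Real w=\alpha x\geq0$. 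Using the identities $|\cos(p+iq)|^{2}=\cos^{2}p+\sinh^{2}q$ and $|\sin(p+iq)|^{2}=\sin^{2}p+\sinh^{2}q$, the bound on $|\Ima w|$ gives $|\cos w|,|\sin w|\leq\cosh\frac{1}{2\sqrt{a}}$, and then $\left|\frac{1}{k}\sin w\right|\leq\frac{1}{\sqrt{a}}\cosh\frac{1}{2\sqrt{a}}$.

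It remains to bound $|k\sin w|$, and this is the genuinely delicate point: $|k|^{2}=|\zeta|=\sqrt{\xi^{2}+t^{2}/x^{2}}$ is not bounded as $x\to0^{+}$, so one must exploit the matching smallness of $\sin(kx)$. I would split on the size of $x$. If $x\geq1$, then $|k|^{2}\leq\sqrt{b^{2}+1}$, so $|k\sin w|\leq(b^{2}+1)^{1/4}\cosh\frac{1}{2\sqrt{a}}$. If $0<x<1$, use that, since $\Ima w$ lies in the fixed interval $\left[-\frac{1}{2\sqrt{a}},\frac{1}{2\sqrt{a}}\right]$, there is a constant $D=D(a)\geq1$ with $\sinh^{2}q\leq Dq^{2}$ for $|q|\leq\frac{1}{2\sqrt{a}}$; hence $|\sin w|^{2}=\sin^{2}p+\sinh^{2}q\leq p^{2}+Dq^{2}\leq D|w|^{2}$, and therefore
\[
|k\sin w|\leq\sqrt{D}\,|k|\,|w|=\sqrt{D}\,|k|^{2}x=\sqrt{D}\,|\zeta|\,x=\sqrt{D}\,|\xi x+it|=\sqrt{D}\sqrt{\xi^{2}x^{2}+t^{2}}\leq\sqrt{D}\sqrt{b^{2}+1}.
\]
Taking $M_{I}$ to be a suitable multiple of the maximum of the four bounds thus obtained, and applying the identical estimates to the entries of $T^{(0)}_{x,0}(\zeta)^{-1}$, completes the argument. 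I expect this last entry to be the main obstacle: one must recognize that the blow-up of $|k|=|\sqrt{\zeta}|$ as $x\to0$ is exactly cancelled by the vanishing of $\sin(kx)$, so that $|k\sin(kx)|$ is controlled by $|\zeta x|=\sqrt{\xi^{2}x^{2}+t^{2}}$, which is bounded on $\{x\leq1,\ |t|\leq1\}$.
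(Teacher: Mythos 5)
Your proof is correct and fully rigorous, and it takes a genuinely different (and in my view more self-contained) route than the paper's argument. The paper's proof is essentially a two-endpoint asymptotic analysis: for $x\to\infty$ it expands $\sqrt{\xi+\frac{it}{x}}$ in a Taylor series around $\xi$ to see the entries are bounded, and for $x\to 0$ it identifies $\sqrt{\xi+\frac{it}{x}}\sin\bigl(\sqrt{\xi+\frac{it}{x}}\,x\bigr)$ as the only problematic entry and computes its limit (namely $it+o(x)$) by expanding $\sin$ around $x=0$; boundedness on the intermediate compact range of $x$ is then left to continuity. Your argument instead produces a single uniform estimate valid for all $x>0$: you observe that $\Ima\bigl(\sqrt{\zeta}\,x\bigr)=\frac{t}{2\Real\sqrt{\zeta}}$ and that $\Real\sqrt{\zeta}\geq\sqrt{\Real\zeta}=\sqrt{\xi}\geq\sqrt{a}$, so that the strip condition $|\Ima w|\leq\frac{1}{2\sqrt{a}}$ immediately bounds $|\cos w|$ and $|\sin w|$ via $|\cos(p+iq)|^{2}=\cos^{2}p+\sinh^{2}q$ and its $\sin$ analogue; and you then handle the potentially unbounded entry $k\sin w$ by the clean cancellation $|k\sin(kx)|\lesssim|k|^{2}x=|\zeta|x=\sqrt{\xi^{2}x^{2}+t^{2}}$ for small $x$. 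Both proofs rest on the same underlying fact that $\Ima\bigl(\sqrt{\zeta}\,x\bigr)$ stays bounded, but your version makes the constants explicit, avoids the separate "bounded by continuity on the middle range" step, and isolates the small-$x$ cancellation with an actual inequality rather than a limit computation, which I find more convincing as a proof of the stated \emph{uniform} bound.
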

\begin{remark}
We consider the principal branch of the square root defined on $\bbC \setminus (-\infty, 0]$. This is not a problem since we only consider $\xi>0$.
\end{remark}

\begin{proof}
We need to uniformly bound the entries of the matrix
\[
T^{(0)}_{x,0}\left(\xi+\frac{it}{x}\right)=\begin{bmatrix}\cos\left(\sqrt{\xi+\frac{it}{x}}x\right) & \frac{1}{\sqrt{\xi+\frac{it}{x}}}\mbox{\ensuremath{\sin}\ensuremath{\ensuremath{\left(\sqrt{\xi+\frac{it}{x}}x\right)}}}\\
-\sqrt{\xi+\frac{it}{x}}\mbox{\ensuremath{\sin}\ensuremath{\ensuremath{\left(\sqrt{\xi+\frac{it}{x}}x\right)}}} & \cos\ensuremath{\left(\sqrt{\xi+\frac{it}{x}}x\right)}
\end{bmatrix}
\]
By developing the square root in a Taylor series around $\xi$, it is not hard to see that the entries are bounded at  $x\rightarrow \infty$. As for the limit $x \rightarrow 0$, we only need to estimate $\sqrt{\xi+\frac{it}{x}}\mbox{\ensuremath{\sin}\ensuremath{\ensuremath{\left(\sqrt{\xi+\frac{it}{x}}x\right)}}}$
(bounds for the other entries are obvious by continuity and the fact that the argument goes to zero). Writing
series around $x=0$,
\[
\sin\ensuremath{\left(\sqrt{\xi+\frac{it}{x}}x\right)}=\sqrt{\xi+\frac{it}{x}}x-\frac{1}{3!}\left(\sqrt{\xi+\frac{it}{x}}x\right)^{3}+o\left(x^{2}\right)
\]
we see that
\begin{eqnarray*}
\sqrt{\xi+\frac{it}{x}}\sin\ensuremath{\left(\sqrt{\xi+\frac{it}{x}}x\right)} & = & x\left(\xi+\frac{it}{x}\right)-\frac{1}{3!}x^{3}\left(\xi+\frac{it}{x}\right)^{2}+o\left(x\right)\\
 & = & it+o\left(x\right),
\end{eqnarray*}
and we are done.
\end{proof}

For the intervals $I_{m}$, we denote the constants above simply as $M_{m}=M_{I_{m}}$. From (\ref{eq:Definition_of_f_l_g_l}) we conclude that for any $\xi\in I_{m}$, and for any $t\in[-1,1]$
\begin{eqnarray}
\left|\left|\begin{bmatrix}A_{1}^{\left(\ell\right)}\left(\xi+\frac{it}{x},x\right)\\
A_{2}^{\left(\ell\right)}\left(\xi+\frac{it}{x},x\right)
\end{bmatrix}\right|\right| & \leq & M_{m}\left|\left|\begin{bmatrix}u^{\left(\ell\right)}\left(\xi+\frac{it}{x},x\right)\\
u^{\left(\ell\right)}\,'\left(\xi+\frac{it}{x},x\right)
\end{bmatrix}\right|\right|\nonumber \\
\left|\left|\begin{bmatrix}u^{\left(\ell\right)}\left(\xi+\frac{it}{x},x\right)\\
u^{\left(\ell\right)}\,'\left(\xi+\frac{it}{x},x\right)
\end{bmatrix}\right|\right| & \leq & M_{m}\left|\left|\begin{bmatrix}A_{1}^{\left(\ell\right)}\left(\xi+\frac{it}{x},x\right)\\
A_{2}^{\left(\ell\right)}\left(\xi+\frac{it}{x},x\right)
\end{bmatrix}\right|\right|.\label{eq:f=000026g_VS_u}
\end{eqnarray}
We assume from now on also that, for any $m$, $M_m \geq 1$.

The following is known as the continuous Christoffel-Darboux formula.
\begin{lemma}
\label{lem:CD-formula}If $\xi\neq\zeta$,
\begin{equation}
S_{L}\left(\xi,\zeta\right)=\frac{u\left(\xi,L\right)u'\left(\zeta,L\right)-u\left(\zeta,L\right)u'\left(\xi,L\right)}{\xi-\zeta}\label{eq:CD_Formula}
\end{equation}
and for the diagonal case,
\begin{equation}
S_{L}\left(\xi,\xi\right)=u'\left(\xi,L\right)\frac{\textrm{d}}{\textrm{d}\xi}u\left(\xi,L\right)-\frac{\textrm{d}}{\textrm{d}\xi}u'\left(\xi,L\right)u\left(\xi,L\right)\label{eq:CD_Formula_Diagonal}
\end{equation}
\end{lemma}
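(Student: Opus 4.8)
The plan is to run the classical Wronskian (Green's identity) argument that underlies the discrete Christoffel--Darboux formula, now in the ODE setting. Fix $\xi\neq\zeta$ and abbreviate $u=u(\xi,\cdot)$, $v=u(\zeta,\cdot)$; both are $C^{\infty}$ solutions of \eqref{eq:ef_equation} because $V$ is smooth and bounded. The first step is to multiply the equation for $u$ by $v$ and the equation for $v$ by $u$ and subtract: the potential term cancels, and what remains rearranges to
\[
\frac{\textrm{d}}{\textrm{d} r}\Bigl(u(\xi,r)\,u'(\zeta,r)-u(\zeta,r)\,u'(\xi,r)\Bigr)=(\xi-\zeta)\,u(\xi,r)\,u(\zeta,r).
\]
Next I would integrate this identity over $[0,L]$. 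The boundary term at $r=0$ vanishes because the Neumann conditions give $u(\xi,0)=u(\zeta,0)=1$ and $u'(\xi,0)=u'(\zeta,0)=0$, so $S_{L}(\xi,\zeta)=\int_{0}^{L}u(\xi,r)u(\zeta,r)\,dr$ equals $\bigl(u(\xi,L)u'(\zeta,L)-u(\zeta,L)u'(\xi,L)\bigr)/(\xi-\zeta)$, which is \eqref{eq:CD_Formula}.

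For the diagonal formula I would pass to the limit $\zeta\to\xi$ in \eqref{eq:CD_Formula}. The numerator $N(\zeta):=u(\xi,L)u'(\zeta,L)-u(\zeta,L)u'(\xi,L)$ vanishes at $\zeta=\xi$, and $\zeta\mapsto S_{L}(\xi,\zeta)$ is continuous (the integrand in \eqref{eq:CD-Kernel-Definition} is jointly continuous on the compact set $[0,L]$), so $S_{L}(\xi,\xi)=\lim_{\zeta\to\xi}\bigl(N(\zeta)-N(\xi)\bigr)/(\xi-\zeta)=-N'(\xi)$. Differentiating $N$ in $\zeta$, evaluating at $\zeta=\xi$, and using that $u(\xi,x)$ is entire in $\xi$ (as recorded after \eqref{eq:Definition_of_f_l_g_l}) and $C^{\infty}$ in $x$ --- so that $\partial_{\xi}$ and $\partial_{x}$ commute --- gives exactly \eqref{eq:CD_Formula_Diagonal}.

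I do not expect a genuine obstacle in this lemma; the only things that need (routine) care are the sign bookkeeping in the Wronskian computation, the interchange $\partial_{\xi}\partial_{x}u=\partial_{x}\partial_{\xi}u$, and the continuity of $S_{L}(\xi,\cdot)$ used to take the diagonal limit. If one prefers to avoid the limiting argument, the diagonal identity can instead be obtained directly: differentiate \eqref{eq:ef_equation} in $\xi$, form the Wronskian of $u(\xi,\cdot)$ with $\partial_{\xi}u(\xi,\cdot)$, integrate over $[0,L]$, and use the boundary conditions at $0$; this yields the same formula.
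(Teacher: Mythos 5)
Your argument is correct and is precisely the integration-by-parts (Wronskian/Green's identity) computation the paper alludes to by citing Lemma 3.4 of Maltsev \cite{maltsev}; you have simply written out the details, including the correct sign bookkeeping in both the off-diagonal formula and the $\zeta\to\xi$ limit for the diagonal case. No gap.
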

\begin{proof}
This is easily proved using integration by parts; see Lemma 3.4 of
\cite{maltsev}.
\end{proof}

Recall the CD kernels, $S_x^{(\ell)} \left(\xi, \zeta \right)$, associated with $H^{(\ell)}$ defined in \eqref{eq:Truncated_CD}.
Let
\beq \label{eq:tildeA}
\begin{split}
\widetilde{A_1}^{(\ell)}(\xi,x) &=\frac{A_1^{(\ell)}(\xi,x)}{\sqrt{\xi}} \\
\widetilde{A_2}^{(\ell)}(\xi,x) &=A_2^{(\ell)}(\xi,x)
\end{split}
\eeq

\begin{lemma}
\label{lem:convergence_when_divided_by_Kappa}
For any $\ell \in \bbN$ and $\xi\in \mathbb{R}^+$:
\beq \no
\lim_{x \rightarrow \infty}
\frac{S_{x}^{\left(\ell\right)}\left(\xi+\frac{a}{x},\xi+\frac{b}{x}\right)}{x\left(\frac{\widetilde{A_{1}}^{\left(\ell\right)}\left(\xi,x\right)^{2}+\widetilde{A_{2}}^{\left(\ell\right)}\left(\xi,x\right)^{2}}{2}\right)} = \frac{\sin\left(\pi\cdot\rho\left(\xi\right)\left(a-b\right)\right)}{\pi\cdot\rho\left(\xi\right)\left(a-b\right)},
\eeq
where $\rho(\xi)=\frac{1}{2\pi \sqrt{\xi}}$ is the derivative of the density of states of $\Delta$.
Moreover, for any $m$ and $C>0$, the convergence is uniform in $\xi \in I_m$ and in $|a|,|b| \leq C$. That is, for any $m, C>0$ and any $\varepsilon>0$, there exists $N(\varepsilon,m)$ so that for any $x \geq N(\varepsilon,m)$, any $\xi \in I_m$, and any  $|a|,|b| \leq C$
\[
\left|\frac{S_{x}^{\left(\ell\right)}\left(\xi+\frac{a}{x},\xi+\frac{b}{x}\right)}{x\left(\frac{\widetilde{A_{1}}^{\left(\ell\right)}\left(\xi,x\right)^{2}+\widetilde{A_{2}}^{\left(\ell\right)}\left(\xi,x\right)^{2}}{2}\right)}-\frac{\sin\left(\pi\cdot\rho\left(\xi\right)\left(a-b\right)\right)}{\pi\cdot\rho\left(\xi\right)\left(a-b\right)}\right|<\epsilon.
\]
\end{lemma}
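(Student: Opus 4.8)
The plan is to fix $\ell$, drop it from the notation (since the potential $V^{(\ell)}$ is compactly supported in $[0,N_\ell+1]$, for $x$ large we are dealing with a free evolution), and reduce the continuum CD kernel at energy $\xi+\frac{a}{x}$ to an explicit trigonometric expression via the CD formula of Lemma \ref{lem:CD-formula} together with the variation-of-parameters representation. Concretely: for $x \geq N_\ell+1$ the functions $A_1^{(\ell)},A_2^{(\ell)}$ are constant in $x$ (as noted after \eqref{eq:Definition_of_f_l_g_l}), so writing $u^{(\ell)}(\zeta,x) = A_1^{(\ell)}(\zeta)\Phi(\zeta,x)+A_2^{(\ell)}(\zeta)\Psi(\zeta,x)$ with $\Phi,\Psi$ the explicit free solutions $\frac{1}{\sqrt\zeta}\sin(\sqrt\zeta x)$ and $\cos(\sqrt\zeta x)$, one inserts $\zeta = \xi+\frac{a}{x}$ and $\zeta=\xi+\frac{b}{x}$ into \eqref{eq:CD_Formula}. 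First I would expand $\sqrt{\xi+\frac{a}{x}}\,x = \sqrt\xi\, x + \frac{a}{2\sqrt\xi} + O(1/x)$ and similarly for $b$; the key point is that the large phase $\sqrt\xi\,x$ cancels in the relevant combinations, leaving a clean dependence on $a-b$ through $\frac{a-b}{2\sqrt\xi} = \pi\rho(\xi)(a-b)$.

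Next I would carry out the bookkeeping. The numerator of \eqref{eq:CD_Formula} at the shifted arguments is a bilinear expression in $A_1^{(\ell)},A_2^{(\ell)}$ evaluated at $\xi+\frac{a}{x}$ and $\xi+\frac{b}{x}$; by continuity (indeed analyticity, noted after \eqref{eq:Definition_of_f_l_g_l}) of the $A_i^{(\ell)}$ in $\zeta$, these converge to $A_i^{(\ell)}(\xi)$ as $x\to\infty$, and the remaining factors are the products of free solutions whose asymptotics I have just described. The denominator $\xi-\zeta$ contributes $\frac{a-b}{x}$, which combines with a factor of $x$ to give the $O(1)$ scaling, and the normalizing quantity $x\big(\frac{\widetilde{A_1}^{(\ell)}(\xi,x)^2+\widetilde{A_2}^{(\ell)}(\xi,x)^2}{2}\big)$ — with $\widetilde{A_1}^{(\ell)} = A_1^{(\ell)}/\sqrt\xi$ from \eqref{eq:tildeA}, matching the $\frac{1}{\sqrt\xi}$ appearing in $\Phi$ — is exactly what is needed to cancel the $A_i^{(\ell)}(\xi)$-dependence, so that the ratio converges to $\frac{\sin(\pi\rho(\xi)(a-b))}{\pi\rho(\xi)(a-b)}$. (One also checks the off-diagonal formula \eqref{eq:CD_Formula} degenerates correctly to \eqref{eq:CD_Formula_Diagonal} in the limit $a=b$, recovering the value $1$ of the sinc function; alternatively treat $a=b$ directly.)

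For the uniformity statement, I would observe that every estimate above is locally uniform in $\xi$: the Taylor expansions of $\sqrt{\xi+\frac{a}{x}}\,x$ have error terms controlled by $\frac{|a|}{\sqrt\xi}$ and $\frac{|a|^2}{\xi^{3/2}x}$, which are uniformly small for $\xi\in I_m=[\frac1m,m]$ and $|a|,|b|\leq C$; the functions $A_i^{(\ell)}(\cdot)$ are continuous on the compact set $I_m$, hence uniformly continuous, so $A_i^{(\ell)}(\xi+\frac{a}{x}) \to A_i^{(\ell)}(\xi)$ uniformly over $\xi\in I_m$, $|a|\le C$; and $\widetilde{A_1}^{(\ell)}(\xi,x)^2+\widetilde{A_2}^{(\ell)}(\xi,x)^2$ is bounded below away from $0$ on $I_m$ — this needs a brief argument, e.g.\ because $(A_1^{(\ell)},A_2^{(\ell)})$ cannot both vanish (the solution $u^{(\ell)}$ is nontrivial) and depends continuously on $\xi$ on the compact set $I_m$, so it is bounded away from zero there. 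I expect the main obstacle to be precisely this last point together with keeping the error terms genuinely uniform down to small $\xi$ (near $\frac1m$), where $\rho(\xi)$ and various $\xi^{-1/2}$ factors are largest; Lemma \ref{lem:Bound_For_Real_Arguments} and the constant $\hat C\sqrt m$ it provides are the tool for controlling $\|T^{(0)}_{x,0}\|$ and hence the sizes of $u^{(\ell)}$ versus $A_i^{(\ell)}$ uniformly on $I_m$. Once all pieces are assembled the claimed $\varepsilon$–$N(\varepsilon,m)$ statement follows by a triangle-inequality argument combining finitely many uniform estimates.
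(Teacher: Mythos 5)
Your plan matches the paper's strategy on the region away from the diagonal: both exploit that $A_1^{(\ell)},A_2^{(\ell)}$ freeze for $x\ge N_\ell+1$, write $u^{(\ell)}$ in terms of $\Phi,\Psi$, plug the shifted energies $\xi\pm\frac{a}{x}$ into the CD formula \eqref{eq:CD_Formula}, exploit $\lim_{x\to\infty}\big(\sqrt{\xi+a/x}-\sqrt{\xi+b/x}\big)x=\frac{a-b}{2\sqrt\xi}$, and observe that the normalizer $\widetilde{A_1}^{(\ell)2}+\widetilde{A_2}^{(\ell)2}$ cancels the $A_i$-dependence. The lower bound on $\kappa_x^{(\ell)}$ via the nonvanishing of $(A_1^{(\ell)},A_2^{(\ell)})$ and compactness of $I_m$ is also right.

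The genuine gap is in the passage to \emph{uniform} convergence down to $a=b$. Everything you describe yields, at best, uniformity on sets of the form $\{|a|,|b|\le C,\ |a-b|\ge\delta\}$ for each fixed $\delta>0$. The obstruction near the diagonal is structural: the CD formula carries the factor $\frac{1}{\xi-\zeta}=\frac{x}{a-b}$, and after the trigonometric rearrangement the numerator consists of a main term proportional to $\sin\big((\sqrt{\alpha_x}-\sqrt{\beta_x})x\big)$ plus correction terms (the ``sum-frequency'' $\sin\big((\sqrt{\alpha_x}+\sqrt{\beta_x})x\big)$ contribution and the cross terms in $A_1^{(\ell)}A_2^{(\ell)}$) which tend to zero as $x\to\infty$, but only at a rate $o(1)$ that is \emph{not} dominated by $|a-b|$. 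Dividing such an $o(1)$ error by $a-b$ destroys uniformity as $a\to b$; ``checking that the off-diagonal formula degenerates to the diagonal one at $a=b$'' or ``treating $a=b$ directly'' handles only the single point, not the bad annulus $0<|a-b|<\delta$. The paper closes this gap with a complex-analytic bootstrap: since $u^{(\ell)}(\cdot,x)$ and hence $S_x^{(\ell)}(\xi+\frac{a}{x},\xi+\frac{b}{x})$ are entire in $b$ for fixed $a$, and the target sinc is entire as well, uniform convergence on the annulus $\delta\le|a-b|\le 1$ together with the Cauchy integral formula yields uniform convergence on the full disk $|a-b|\le 1$. Your proposal needs either this Vitali/Cauchy argument or a quantitative replacement showing the $o(1)$ errors are in fact $O(|a-b|)$ uniformly, which the direct expansion does not deliver.
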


\begin{proof}
We first prove uniform convergence for $|a|, |b| \leq C$, $|a-b| \geq \delta$ for any $\delta> 0$. For any $\xi'$ and $j=1,2$ we let
\beq \no
\widehat{A_j}(\xi')=\widehat{A_j}^{(\ell)}(\xi')=\lim_{x \rightarrow \infty}\widetilde{A_j}^{(\ell)}(\xi',x)
\eeq
(since $\widetilde{A_j}^{(\ell)}(\xi',x)$ is constant in $x$ for $x \geq N_{\ell+1}$ the limit clearly exists), and we note that for any $a$
\beq \label{eq:limit_of_variation_coeff}
\lim_{x \rightarrow \infty}\widetilde{A_j}^{(\ell)}\left(\xi+\frac{a}{x}, x \right)=\lim_{x \rightarrow \infty}\widetilde{A_j}^{(\ell)}\left(\xi+\frac{a}{x}, N_{\ell+1} \right) =\widehat{A_j}(\xi).
\eeq

Fix $\xi \in (0,\infty)$. In order to streamline the computations below, let
\beq \no
\begin{split}
\alpha_x&=\xi+\frac{a}{x}\\
\beta_x&=\xi+\frac{b}{x}
\end{split}
\eeq
Now apply Lemma \ref{lem:CD-formula} to $S_x^{(\ell)} \left(\xi+\frac{a}{x},\xi+\frac{b}{x} \right)$ and use \eqref{eq:Definition_of_f_l_g_l} to get
\beq \no
\begin{split}
& \frac{a-b}{x}S_{x}^{\left(\ell\right)}\left(\alpha_x,\beta_x \right) \\
& = A_1^{(\ell)}\left(\alpha_x,x \right)A_1^{(\ell)} \left(\beta_x,x \right) \\
&\cdot \left(\frac{\sin\left(\sqrt{\alpha_x}x \right)\cos \left(\sqrt{\beta_x}x \right)}{\sqrt{\alpha_x}}-\frac{\sin\left(\sqrt{\beta_x}x \right)\cos \left(\sqrt{
\alpha_x}x \right)}{\sqrt{\beta_x}} \right)\\
&+ A_2^{(\ell)}\left(\alpha_x,x \right)A_2^{(\ell)} \left(\beta_x,x \right) \\
&\cdot  \left(\frac{\sin\left(\sqrt{\alpha_x}x \right)\cos \left(\sqrt{\beta_x}x \right)}{\sqrt{\alpha_x}^{-1}}-\frac{\sin\left(\sqrt{\beta_x}x \right)\cos \left(\sqrt{\alpha_x}x \right)}{\sqrt{\beta_x}^{-1}} \right)\\
&+\cos\left( \sqrt{\beta_x}x\right) \cos \left(\sqrt{\alpha_x}x \right)  \\
&\cdot \left(A_1^{(\ell)}\left(\beta_x,x \right) A_2^{(\ell)}\left(\alpha_x,x \right) - A_1^{(\ell)}\left( \alpha_x,x\right)A_2^{(\ell)}\left( \beta_x,x\right)\right) \\
&+\sin\left(\sqrt{\beta_x}x \right)\sin \left(\sqrt{\alpha_x}x \right) \\
&\cdot \left(\frac{A_1^{(\ell)}\left(\beta_x,x \right) A_2^{(\ell)}\left(\alpha_x,x \right)}{\frac{\sqrt{\beta_x}}{\sqrt{\alpha_x}}} -\frac{A_1^{(\ell)}\left( \alpha_x,x\right)A_2^{(\ell)}\left( \beta_x,x\right)}{ \frac{\sqrt{\alpha_x}}{\sqrt{\beta_x}}} \right)\\
&= \widetilde{A_1}^{(\ell)}\left(\alpha_x,x \right) \widetilde{A_1}^{(\ell)} \left(\beta_x,x \right)\sqrt{\alpha_x}\sqrt{\beta_x} \\
&\cdot \left(\frac{\sin\left(\sqrt{\alpha_x}x \right)\cos \left(\sqrt{\beta_x}x \right)}{\sqrt{\alpha_x}}-\frac{\sin\left(\sqrt{\beta_x}x \right)\cos \left(\sqrt{
\alpha_x}x \right)}{\sqrt{\beta_x}} \right)\\
&+ \widetilde{A_2}^{(\ell)}\left(\alpha_x,x \right)\widetilde{A_2}^{(\ell)} \left(\beta_x,x \right) \\
&\cdot  \left(\frac{\sin\left(\sqrt{\alpha_x}x \right)\cos \left(\sqrt{\beta_x}x \right)}{\sqrt{\alpha_x}^{-1}}-\frac{\sin\left(\sqrt{\beta_x}x \right)\cos \left(\sqrt{\alpha_x}x \right)}{\sqrt{\beta_x}^{-1}} \right)+o(1)\\
\end{split}
\eeq
since $\lim_{x \rightarrow \infty} \alpha_x=\lim_{x \rightarrow \infty} \beta_x=\xi$.

For the first term, write
\beq \no
\begin{split}
& \widetilde{A_1}^{(\ell)}\left(\alpha_x,x \right) \widetilde{A_1}^{(\ell)} \left(\beta_x,x \right)\sqrt{\alpha_x}\sqrt{\beta_x} \\
&\cdot \left(\frac{\sin\left(\sqrt{\alpha_x}x \right)\cos \left(\sqrt{\beta_x}x \right)}{\sqrt{\alpha_x}}-\frac{\sin\left(\sqrt{\beta_x}x \right)\cos \left(\sqrt{
\alpha_x}x \right)}{\sqrt{\beta_x}} \right)\\
&= \widetilde{A_1}^{(\ell)}\left(\alpha_x,x \right) \widetilde{A_1}^{(\ell)} \left(\beta_x,x \right)\sqrt{\alpha_x}\sqrt{\beta_x} \sin \left((\sqrt{\alpha_x}-\sqrt{\beta_x})x \right) \\
&\cdot \left(\frac{1}{2\sqrt{\alpha_x}}+\frac{1}{2\sqrt{\beta_x}} \right) \\
&+\widetilde{A_1}^{(\ell)}\left(\alpha_x,x \right) \widetilde{A_1}^{(\ell)} \left(\beta_x,x \right)\sqrt{\alpha_x}\sqrt{\beta_x} \sin \left((\sqrt{\alpha_x}+\sqrt{\beta_x})x \right) \\
&\cdot \left(\frac{1}{2\sqrt{\alpha_x}}-\frac{1}{2\sqrt{\beta_x}} \right) \\
= &\widetilde{A_1}^{(\ell)}\left(\alpha_x,x \right) \widetilde{A_1}^{(\ell)} \left(\beta_x,x \right)\sqrt{\alpha_x}\sqrt{\beta_x} \sin \left((\sqrt{\alpha_x}-\sqrt{\beta_x})x \right) \\
&\cdot \left(\frac{1}{2\sqrt{\alpha_x}}+\frac{1}{2\sqrt{\beta_x}} \right) +o(1). \\
\end{split}
\eeq
Using
\beq \no
\lim_{x\rightarrow\infty}\left(\sqrt{\xi+\frac{a}{x}}-\sqrt{\xi+\frac{b}{x}}\right)x=\frac{a-b}{2\sqrt{\xi}}
\eeq
it now follows that
\beq \no
\begin{split}
& \lim_{x \rightarrow \infty} \widetilde{A_1}^{(\ell)}\left(\alpha_x,x \right) \widetilde{A_1}^{(\ell)} \left(\beta_x,x \right)\sqrt{\alpha_x}\sqrt{\beta_x} \\
&\cdot \left(\frac{\sin\left(\sqrt{\alpha_x}x \right)\cos \left(\sqrt{\beta_x}x \right)}{\sqrt{\alpha_x}}-\frac{\sin\left(\sqrt{\beta_x}x \right)\cos \left(\sqrt{
\alpha_x}x \right)}{\sqrt{\beta_x}} \right)\\
&=\widehat{A_1}^{(\ell)}\left(\xi \right)^2 \sqrt{\xi} \sin \left(\frac{a-b}{2\sqrt{\xi}} \right).
\end{split}
\eeq
A similar computation shows that
\beq \no
\begin{split}
&\lim_{x\rightarrow \infty} A_2^{(\ell)}\left(\alpha_x,x \right)A_2^{(\ell)} \left(\beta_x,x \right) \\
&\cdot  \left(\frac{\sin\left(\sqrt{\alpha_x}x \right)\cos \left(\sqrt{\beta_x}x \right)}{\sqrt{\alpha_x}^{-1}}-\frac{\sin\left(\sqrt{\beta_x}x \right)\cos \left(\sqrt{\alpha_x}x \right)}{\sqrt{\beta_x}^{-1}} \right)\\
&=\widehat{A_2}^{(\ell)}\left(\xi \right)^2 \sqrt{\xi} \sin \left(\frac{a-b}{2\sqrt{\xi}} \right).
\end{split}
\eeq

Recalling that $\widetilde{A_j}^{(\ell)}(\xi,x)=\widehat{A_j}^{(\ell)}(\xi)$ for any $x \geq N_{\ell+1}$, and that $\pi \rho(\xi)=(2\sqrt{\xi})^{-1}$, it follows that we have uniform convergence for $|a|, |b| \leq C$ and $|a-b| \geq \delta$ for any $\delta>0$.

Now, since $u^{\ell} (\xi, x)$ is entire in $\xi$, $f^{\ell}_x(b)=S^{\ell}_x\left(\xi+\frac{a}{x}, \xi+\frac{b}{x} \right)$, for fixed $a$, is an analytic function of $b$. Clearly, $f(b)=\frac{\sin\left(\pi\cdot\rho\left(\xi\right)\left(a-b\right)\right)}{\pi\cdot\rho\left(\xi\right)\left(a-b\right)}$ is also an analytic function of $b$. Therefore, by using the Cauchy formula, uniform convergence of $f_x^{\ell}(b)$  to $f(b)$ in the annulus $\delta \leq |a-b| \leq 1$, implies uniform convergence on the disk $|a-b|<\delta$. This then implies uniform convergence on the disk $|a-b| \leq 1$. Thus  we have uniform convergence for $|a|, |b|  \leq C$ and we are done.

\end{proof}

The lemma above shows that for any $H^{(\ell)}$ the CD kernel is close to the desired limit for sufficiently large $x$. As a first step towards understanding $H$ we want to understand the effect of the addition of $\lambda_{(\ell+1)} W(x-N_{\ell+1})$ to the potential of $H^{(\ell)}$ (i.e., going to $H^{(\ell+1)}$). The lemma below serves this purpose.

\begin{lemma} \label{lem:MatrixDiffEq}
Let $A,B:\mathbb{R}\rightarrow M_{2}\left(\mathbb{C}\right)$
be the solutions to the following matrix initial value problems
\begin{equation}
A'\left(x\right)=\begin{bmatrix}0 & 1\\
-\xi & 0
\end{bmatrix}A\left(x\right)\label{eq:Lem_17_Def_Of_A}
\end{equation}
\begin{equation}
B'\left(x\right)=\begin{bmatrix}0 & 1\\
\lambda W\left(x\right)-\xi & 0
\end{bmatrix}B\left(x\right)\label{eq:Lem_17_Def_Of_B}
\end{equation}
with the initial condition
\[
A\left(0\right)=B\left(0\right)=\begin{bmatrix}1 & 0\\
0 & 1
\end{bmatrix}
\]
where $W\left(x\right)$ is a smooth, nonnegative function with $\supp W \subseteq [0,1]$, and $\xi\in\mathbb{R}$.
Then
\begin{equation} \label{eq:Conclusion_Of_MatrixDiffEq_Lemma}
\left|\left|A \left(x\right)-B \left(x\right)\right|\right|\leq C (\xi, W)\left|\lambda\right|
\end{equation}
where $C(\xi, W)$ is a constant depending on $\xi$ and $W\left(x\right)$.
\end{lemma}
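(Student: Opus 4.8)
The plan is a routine Duhamel/Gr\"onwall estimate. Abbreviate $M_0=\begin{bmatrix}0&1\\-\xi&0\end{bmatrix}$ and $E=\begin{bmatrix}0&0\\1&0\end{bmatrix}$, so that \eqref{eq:Lem_17_Def_Of_A} and \eqref{eq:Lem_17_Def_Of_B} read $A'=M_0A$ and $B'=(M_0+\lambda W(x)E)B$. Put $D(x)=B(x)-A(x)$; then $D(0)=0$ and subtracting the two equations gives the inhomogeneous linear equation
\[
D'(x)=M_0D(x)+\lambda W(x)\,E\,B(x).
\]
Since $A$ is the fundamental matrix of $Y'=M_0Y$ with $A(0)=I$ --- equivalently $A(x)=T^{(0)}_{x,0}(\xi)$ --- variation of parameters yields
\[
D(x)=\lambda\int_0^x A(x)A(s)^{-1}W(s)\,E\,B(s)\,ds=\lambda\int_0^x T^{(0)}_{x,s}(\xi)\,W(s)\,E\,B(s)\,ds .
\]
(For $x\le 0$ one has $W\equiv 0$ on the relevant range and $A\equiv B$ there, so there is nothing to prove; assume $x>0$.)

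It remains to bound the three factors in the integrand uniformly. For the propagator, Lemma \ref{lem:Bound_For_Real_Arguments} applied to $T^{(0)}_{x,s}(\xi)=T^{(0)}_{x,0}(\xi)T^{(0)}_{s,0}(\xi)^{-1}$ gives a constant $C_1=C_1(\xi)$ with $\|T^{(0)}_{x,s}(\xi)\|\le C_1$ for all $0\le s\le x$. For $B$, recall $W$ is supported in $[0,1]$, so the integral is really over $s\in[0,\min(x,1)]$; on $[0,1]$ Gr\"onwall's inequality gives $\|B(s)\|\le\exp\!\big(\int_0^1\|M_0+\lambda W(r)E\|\,dr\big)$, which for $|\lambda|\le 1$ (all that is needed, since $\lambda_n\to 0$ in \eqref{eq:Pearson_Potential}) is at most a constant $C_2=C_2(\xi,W)$. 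Finally $\|E\|=1$ and $\int_0^\infty W(s)\,ds\le\|W\|_\infty<\infty$. Combining,
\[
\|D(x)\|\le|\lambda|\,C_1\,\|W\|_\infty\,C_2=:C(\xi,W)\,|\lambda|,
\]
which is \eqref{eq:Conclusion_Of_MatrixDiffEq_Lemma}.

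There is no genuine obstacle here; the points deserving attention are that the bound on the free propagator must be uniform in \emph{both} endpoints $x$ and $s$ (hence the appeal to Lemma \ref{lem:Bound_For_Real_Arguments}, which is uniform in $x$, rather than to mere continuity on a compact $x$-interval), and that the dependence on $\lambda$ is genuinely linear only after restricting $\lambda$ to a bounded set --- for large $|\lambda|$ the positivity of $W$ makes $\|B\|$ grow roughly like $e^{c\sqrt{|\lambda|}}$, so linearity fails, but this restriction is harmless in the application. One may also avoid the a priori bound on $\|B\|$ altogether by substituting $B=A+D$ in the Duhamel formula and applying Gr\"onwall directly to $\|D\|$, obtaining the same conclusion with $C(\xi,W)=C_1^2\|W\|_\infty\exp(C_1\|W\|_\infty)$.
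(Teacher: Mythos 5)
Your argument is correct and is essentially the paper's own proof recast in Duhamel form: the paper sets $S=A^{-1}B$, solves the resulting linear ODE by successive approximations on $[0,1]$, and propagates by $T^{(0)}_{x,1}$ for $x>1$, which is exactly your $D(x)=\lambda\int_0^x T^{(0)}_{x,s}(\xi)\,W(s)\,E\,B(s)\,ds$ with the integral truncated to $[0,1]$ by the support of $W$. One small point in your favor: you make explicit the restriction $|\lambda|\le 1$ (or any fixed bound) needed for the estimate on $\|B\|$ (equivalently on $\|S\|$) to yield a genuinely linear-in-$|\lambda|$ bound; the paper leaves this implicit, though it is harmless since $\lambda_n\to 0$ in the application.
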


\begin{proof}
Recall the free transfer matrix, $T_{x,0}(\xi)$, defined in \eqref{eq:Free_Fundamental_Matrix}. It is straightforward to verify that \beq \label{eq:TransferMatrixDifferential}
\frac{\textrm{d}}{\textrm{d}x}T^{(0)}_{x,0}\left(\xi\right)=\begin{bmatrix}0 & 1\\
-\xi & 0
\end{bmatrix}T^{(0)}_{x,0}(\xi).
\eeq
Since $T^{(0)}_{0,0}(\xi)=\begin{bmatrix}1 & 0\\
0 & 1
\end{bmatrix}$, it follows that $A(x)=T^{(0)}_{x,0}(\xi)$.
Thus, $A(x)$ is bounded in $x$ and invertible by (\ref{eq:Det_of_transfer_matrix}).
Denote
\beq \no
S\left(x\right)=A\left(x\right)^{-1}B\left(x\right), \quad   S\left(0\right)=\textrm{Id}.
\eeq
By differentiating
$B\left(x\right)$ and by (\ref{eq:Lem_17_Def_Of_B}),
\[
B'\left(x\right)=A'\left(x\right)S\left(x\right)+A\left(x\right)S'\left(x\right)=\begin{bmatrix}0 & 1\\
\lambda W\left(x\right)-\xi & 0
\end{bmatrix}A\left(x\right)S\left(x\right)
\]
So that by (\ref{eq:Lem_17_Def_Of_A}),
\[
\begin{bmatrix}0 & 1\\
-\xi & 0
\end{bmatrix}A\left(x\right)S\left(x\right)+A\left(x\right)S'\left(x\right)=\begin{bmatrix}0 & 1\\
\lambda W\left(x\right)-\xi & 0
\end{bmatrix}A\left(x\right)S\left(x\right)
\]
and upon rearranging,
\begin{eqnarray*}
A\left(x\right)S'\left(x\right) & = & \begin{bmatrix}0 & 0\\
\lambda W\left(x\right) & 0
\end{bmatrix}A\left(x\right)S\left(x\right)\\
S'\left(x\right) & = & \lambda W\left(x\right)A^{-1}\left(x\right)\begin{bmatrix}0 & 0\\
1 & 0
\end{bmatrix}A\left(x\right)S\left(x\right).
\end{eqnarray*}

This is a linear ODE so clearly (e.g.\ by the method of successive approximations \cite{Codd2}), for any $x \in [0,1]$
\begin{equation} \label{eq:S(1)-I}
\left|\left|S\left(x\right)-I\right|\right|\leq |\lambda| \widetilde{C}(\xi, W) x \leq |\lambda| \widetilde{C}(\xi,W)
\end{equation}
where  $\widetilde{C}$ depends on $\sup_x \| T^{(0)}_{x,0}(\xi)\|$ and on $\|W\|_\infty$.
Thus writing
\beq \no
\|A(x)-B(x)\|=\|A(x)\left(I-S(x)\right)\|
\eeq
we immediately see that for $x \in [0,1]$
\beq \no
\|A(x)-B(x)\| \leq \sup_x \|T^{(0)}_{x,0}(\xi)\| \widetilde{C}(\xi,W) |\lambda|.
\eeq

For $x > 1$ we have that
\beq \no
\left(A\left(x\right)-B\left(x\right)\right)'=\begin{bmatrix}0 & 1\\
-\xi & 0
\end{bmatrix}\left( A\left(x\right)-B(x) \right)
\eeq
since $\supp W \subseteq [0,1]$ and so, from \eqref{eq:TransferMatrixDifferential} we see that
\beq \no
A(x)-B(x)=T^{(0)}_{x,1}(\xi)\left( A(1)-B(1) \right)
\eeq
so that for $x>1$
\beq \no
\| A(x)-B(x)\|\leq \sup_x \|T^{(0)}_{x,1}(\xi) \| \sup_x \|T^{(0)}_{x,0}(\xi)\| \widetilde{C}(\xi,W) |\lambda|,
\eeq
and we are done.
\end{proof}

For an interval $I_m=[1/m,m]$, we extend the definition of $M_m$ so that for all $\xi\in I_m$,
$C(\xi,W)\leq M_{m}$.

\begin{lemma}
\label{lem:Weissman_Thm}
Fix $m \in \bbN$. For any  $\xi \in I_{m}$ and for large enough values of $\ell$,
\begin{eqnarray} \label{eq:Weissman_Thm_Eq}
\left|\left|\begin{bmatrix}u^{\left(\ell+1\right)}\left(\xi,x\right)\\
u^{\left(\ell+1\right)}\,'\left(\xi,x\right)
\end{bmatrix}-\begin{bmatrix}u^{\left(\ell\right)}\left(\xi,x\right)\\
u^{\left(\ell\right)}\,'\left(\xi,x\right)
\end{bmatrix}\right|\right| & \leq & C\cdot M_m \cdot\left|\lambda_{\ell+1}\right|\cdot\left|\left|\begin{bmatrix}u^{\left(\ell\right)}\left(\xi,x\right)\\
u^{\left(\ell\right)}\,'\left(\xi,x\right)
\end{bmatrix}\right|\right|\nonumber \\
 & \leq & \widetilde{C}\cdot M_m\cdot\left|\lambda_{\ell+1}\right|\left|\left|\begin{bmatrix}u^{\left(\ell+1\right)}\left(\xi,x\right)\\
u^{\left(\ell+1\right)}\,'\left(\xi,x\right)
\end{bmatrix}\right|\right|\nonumber \\
\end{eqnarray}
where the constants $C,\,\widetilde{C}$ depend only on the function
$W$.
Similarly,
\begin{eqnarray} \label{eq:Weissman_Thm_Eq1}
\left|\left|\begin{bmatrix}A_1^{\left(\ell+1\right)}\left(\xi,x\right)\\
A_2^{\left(\ell+1\right)}\left(\xi,x\right)
\end{bmatrix}-\begin{bmatrix}A_1^{\left(\ell\right)}\left(\xi,x\right)\\
A_2^{\left(\ell\right)}\left(\xi,x\right)
\end{bmatrix}\right|\right| & \leq & C\cdot M_m^2 \cdot\left|\lambda_{\ell+1}\right|\cdot\left|\left|\begin{bmatrix}u^{\left(\ell\right)}\left(\xi,x\right)\\
u^{\left(\ell\right)}\,'\left(\xi,x\right)
\end{bmatrix}\right|\right|\nonumber \\
 & \leq & \widetilde{C}\cdot M_m^2\cdot\left|\lambda_{\ell+1}\right|\left|\left|\begin{bmatrix}u^{\left(\ell+1\right)}\left(\xi,x\right)\\
u^{\left(\ell+1\right)}\,'\left(\xi,x\right)
\end{bmatrix}\right|\right|\nonumber \\
\end{eqnarray}
and
\begin{eqnarray} \label{eq:Weissman_Thm_Eq2}
\left|\left|\begin{bmatrix} \widetilde{A_1}^{\left(\ell+1\right)}\left(\xi,x\right)\\
\widetilde{A_2}^{\left(\ell+1\right)}\left(\xi,x\right)
\end{bmatrix}-\begin{bmatrix} \widetilde{A_1}^{\left(\ell\right)}\left(\xi,x\right)\\
\widetilde{A_2}^{\left(\ell\right)}\left(\xi,x\right)
\end{bmatrix}\right|\right| & \leq & C\cdot \widetilde{M_m}^2 \cdot\left|\lambda_{\ell+1}\right|\cdot\left|\left|\begin{bmatrix}u^{\left(\ell\right)}\left(\xi,x\right)\\
u^{\left(\ell\right)}\,'\left(\xi,x\right)
\end{bmatrix}\right|\right|\nonumber \\
 & \leq & \widetilde{C}\cdot \widetilde{M_m}^2\cdot\left|\lambda_{\ell+1}\right|\left|\left|\begin{bmatrix}u^{\left(\ell+1\right)}\left(\xi,x\right)\\
u^{\left(\ell+1\right)}\,'\left(\xi,x\right)
\end{bmatrix}\right|\right|\nonumber \\
\end{eqnarray}
where $\widetilde{M_m}=M_m \cdot \max_{\xi \in I_m} \left( \sqrt{\xi}, \sqrt{\xi}^{-1} \right)=\sqrt{m}M_m$.
\end{lemma}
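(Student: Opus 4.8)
The plan is to reduce everything to Lemma~\ref{lem:MatrixDiffEq}. Fix $m$ and $\xi\in I_m$. The key observation is that the solution pair $(u^{(\ell+1)},u^{(\ell+1)}{}')$ is obtained from $(u^{(\ell)},u^{(\ell)}{}')$ by applying, over the single interval $[N_{\ell+1},N_{\ell+1}+1]$, the transfer matrix associated with the potential bump $\lambda_{\ell+1}W(\cdot-N_{\ell+1})$, whereas $(u^{(\ell)},u^{(\ell)}{}')$ is propagated over the same interval by the free transfer matrix (since $V^{(\ell)}$ vanishes on a neighborhood of $N_{\ell+1}$ for $\ell$ large, i.e.\ once $N_{\ell+1}>N_\ell+1$). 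Denoting by $T^{(\ell+1)}_{b,a}(\xi)$ the transfer matrix for $H^{(\ell+1)}$ from $a$ to $b$, and translating so that the bump sits at the origin, the matrices $A(x),B(x)$ of Lemma~\ref{lem:MatrixDiffEq} are exactly $T^{(0)}_{x,0}(\xi)$ and $T^{(\ell+1)}_{N_{\ell+1}+x,N_{\ell+1}}(\xi)$ (with $\lambda=\lambda_{\ell+1}$). Both pairs of solutions agree on $[0,N_{\ell+1}]$, so at $x=N_{\ell+1}+1$ we have
\[
\begin{bmatrix}u^{(\ell+1)}(\xi,x)\\ u^{(\ell+1)}{}'(\xi,x)\end{bmatrix}-\begin{bmatrix}u^{(\ell)}(\xi,x)\\ u^{(\ell)}{}'(\xi,x)\end{bmatrix}=\bigl(B(1)-A(1)\bigr)\begin{bmatrix}u^{(\ell)}(\xi,N_{\ell+1})\\ u^{(\ell)}{}'(\xi,N_{\ell+1})\end{bmatrix},
\]
and for $x\ge N_{\ell+1}+1$ the left side is propagated further by the common free transfer matrix $T^{(0)}_{x,N_{\ell+1}+1}(\xi)$. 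For $x\le N_{\ell+1}$ the difference is identically zero, and for $x$ in the bump interval one uses the partial version of \eqref{eq:S(1)-I}.

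From here the first inequality of \eqref{eq:Weissman_Thm_Eq} follows by combining: (i) Lemma~\ref{lem:MatrixDiffEq}, which gives $\|B(1)-A(1)\|\le C(\xi,W)|\lambda_{\ell+1}|\le M_m|\lambda_{\ell+1}|$ by the extended definition of $M_m$; (ii) the fact that the vector $(u^{(\ell)}(\xi,N_{\ell+1}),u^{(\ell)}{}'(\xi,N_{\ell+1}))$ differs from $(u^{(\ell)}(\xi,x),u^{(\ell)}{}'(\xi,x))$ at a general $x$ only by a free transfer matrix (in either direction), whose norm is bounded by $\hat C\sqrt m\le M_m$ via Lemma~\ref{lem:Bound_For_Real_Arguments}; and (iii) on the bump interval, a bound of the same shape from \eqref{eq:S(1)-I} together with $\|A(x)\|\le\hat C\sqrt m$. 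Absorbing all the $\hat C\sqrt m$ and $M_m$ factors (recall $M_m\ge1$ and $M_m\ge\hat C\sqrt m$ after extension) into a single $M_m$ with a $W$-dependent constant $C$ in front yields the stated bound. The second inequality of \eqref{eq:Weissman_Thm_Eq} is then obtained simply by feeding the first one back into itself: since $\|v^{(\ell)}\|\le\|v^{(\ell+1)}\|+CM_m|\lambda_{\ell+1}|\|v^{(\ell)}\|$ (writing $v^{(j)}$ for the solution vectors), and $CM_m|\lambda_{\ell+1}|<\tfrac12$ for $\ell$ large since $\lambda_{\ell+1}\to0$, one gets $\|v^{(\ell)}\|\le2\|v^{(\ell+1)}\|$, which upgrades the first inequality to the second with $\widetilde C=2C$.

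Inequality \eqref{eq:Weissman_Thm_Eq1} follows from \eqref{eq:Weissman_Thm_Eq} by applying $T^{(0)}_{x,0}(\xi)^{-1}$ to both solution vectors, as in \eqref{eq:Definition_of_f_l_g_l}: the difference of the $A$-vectors is $T^{(0)}_{x,0}(\xi)^{-1}$ times the difference of the $u$-vectors, so its norm picks up one extra factor $\|T^{(0)}_{x,0}(\xi)^{-1}\|\le\hat C\sqrt m\le M_m$, giving the extra power $M_m^2$; here one must be slightly careful to use that for $x\ge N_{\ell+1}+1$ the relevant transfer matrices again reduce to free ones, and for smaller $x$ the difference vanishes or is handled on the bump interval, but in all cases the norm of the inverse free transfer matrix is controlled by Lemma~\ref{lem:Bound_For_Real_Arguments}. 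Finally \eqref{eq:Weissman_Thm_Eq2} is immediate from \eqref{eq:Weissman_Thm_Eq1} and the definition \eqref{eq:tildeA}: $\widetilde{A_j}$ differs from $A_j$ only by the scalar factor $\sqrt\xi$ (for $j=1$) or not at all (for $j=2$), and $\max_{\xi\in I_m}(\sqrt\xi,\sqrt\xi^{-1})=\sqrt m$, so replacing $M_m$ by $\widetilde{M_m}=\sqrt m M_m$ absorbs this scaling. I expect the only real subtlety to be bookkeeping: making sure that ``for large enough $\ell$'' is used consistently (so that $N_{\ell+1}>N_\ell+1$, making $V^{(\ell)}$ free near the bump, and so that $CM_m|\lambda_{\ell+1}|<\tfrac12$ for the self-improvement step), and that every transfer-matrix norm appearing is genuinely one of the free ones bounded in Lemma~\ref{lem:Bound_For_Real_Arguments} rather than a $V^{(\ell)}$-dependent object — there is no analytic difficulty beyond Lemma~\ref{lem:MatrixDiffEq}.
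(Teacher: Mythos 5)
Your proposal is correct and follows essentially the same route as the paper: both reduce to Lemma~\ref{lem:MatrixDiffEq} by observing that $u^{(\ell)}$ and $u^{(\ell+1)}$ agree up to the new bump, write the difference at $x\ge N_{\ell+1}$ as $(T^{(\ell)}-T^{(\ell+1)})$ applied to the common value at a point $x_0$ with $N_\ell+1\le x_0\le N_{\ell+1}$ (you take $x_0=N_{\ell+1}$; the paper allows any such $x_0$), transport that point to $x$ by a free transfer matrix, absorb the resulting $m$-dependent constants into $M_m$, obtain the second line of \eqref{eq:Weissman_Thm_Eq} by the same triangle-inequality self-improvement when $CM_m|\lambda_{\ell+1}|<\tfrac12$, and then deduce \eqref{eq:Weissman_Thm_Eq1} and \eqref{eq:Weissman_Thm_Eq2} by applying $T^{(0)}_{x,0}(\xi)^{-1}$ as in \eqref{eq:Definition_of_f_l_g_l} and by the rescaling \eqref{eq:tildeA}.
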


\begin{proof}
Fix $N_{\ell}+1\leq x_{0}\leq N_{\ell+1}$ and let $T^{(\ell)}_{x,x_0}(\xi)$,  $T^{(\ell+1)}_{x,x_0}(\xi)$ be the associated transfer matrices from $x_0$ to $x$. It is a simple computation to check that  for $x_0<x$
\beq \no
T_{x,x_{0}}^{\left(\ell\right)}\,'\left(\xi\right)=\begin{bmatrix}0 & 1\\
-\xi & 0
\end{bmatrix} T_{x,x_0}(\xi),
\eeq
\beq \no
T_{x,x_{0}}^{\left(\ell+1\right)}\,'\left(\xi\right)=\begin{bmatrix}0 & 1\\
\lambda_{\ell+1} W\left(x-N_{\ell+1}\right)-\xi & 0
\end{bmatrix}T_{x,x_{0}}^{\left(\ell+1\right)}\left(\xi\right)
\eeq

Since
\beq \no
\left|\left|\begin{bmatrix}u^{\left(\ell+1\right)}\left(\xi,x\right)\\
u^{\left(\ell+1\right)}\,'\left(\xi,x\right)
\end{bmatrix}-\begin{bmatrix}u^{\left(\ell\right)}\left(\xi,x\right)\\
u^{\left(\ell\right)}\,'\left(\xi,x\right)
\end{bmatrix}\right|\right|=
\left|\left|\left[T_{x,x_{0}}^{\left(\ell\right)}-T_{x,x_{0}}^{\left(\ell+1\right)}\right]\begin{bmatrix}u^{\left(\ell\right)}\left(\xi,x_0\right)\\
u^{\left(\ell\right)}\left(\xi,x_0\right)
\end{bmatrix}\right|\right|,
\eeq
Lemma \ref{lem:MatrixDiffEq} implies that
\beq \no
\left|\left|\begin{bmatrix}u^{\left(\ell+1\right)}\left(\xi,x\right)\\
u^{\left(\ell+1\right)}\,'\left(\xi,x\right)
\end{bmatrix}-\begin{bmatrix}u^{\left(\ell\right)}\left(\xi,x\right)\\
u^{\left(\ell\right)}\,'\left(\xi,x\right)
\end{bmatrix}\right|\right| \leq
C' M_m \left|\lambda_{\ell+1}\right|\cdot\left|\left|\begin{bmatrix}u^{\left(\ell\right)}\left(\xi,x_0\right)\\
u^{\left(\ell\right)}\,'\left(\xi,x_0\right)
\end{bmatrix}\right|\right|
\eeq
for some $C'>0$. But
\beq \no
\left|\left|\begin{bmatrix}u^{\left(\ell\right)}\left(\xi,x_0\right)\\
u^{\left(\ell\right)}\,'\left(\xi,x_0\right)
\end{bmatrix}\right|\right| \leq  \| T_{x,x_0}^{-1} \| \left \|\begin{bmatrix}u^{\left(\ell\right)}\left(\xi,x \right)\\
u^{\left(\ell\right)}\,' \left(\xi,x\right)
\end{bmatrix}\right \|
\eeq
so the first inequality follows with $C= \sup_{x} \| T_{x,x_0}^{-1} \|  C'$.

The second inequality follows from the fact that $\lambda_{n}\rightarrow0$,
so for large enough values of $\ell$, $C M_m \left|\lambda_{\ell+1}\right|<\frac{1}{2}$.
By the triangle inequality,
\begin{eqnarray*}
\left|\left|\begin{bmatrix}u^{\left(\ell\right)}\left(\xi,x\right)\\
u^{\left(\ell\right)}\,'\left(\xi,x\right)
\end{bmatrix}\right|\right|-\left|\left|\begin{bmatrix}u^{\left(\ell+1\right)}\left(\xi,x\right)\\
u^{\left(\ell+1\right)}\,'\left(\xi,x\right)
\end{bmatrix}\right|\right| & \leq & \left|\left|\begin{bmatrix}u^{\left(\ell+1\right)}\left(\xi,x\right)\\
u^{\left(\ell+1\right)}\,'\left(\xi,x\right)
\end{bmatrix}-\begin{bmatrix}u^{\left(\ell\right)}\left(\xi,x\right)\\
u^{\left(\ell\right)}\,'\left(\xi,x\right)
\end{bmatrix}\right|\right|\\
 & \leq & C\cdot M_m \cdot\left|\lambda_{\ell+1}\right|\cdot\left|\left|\begin{bmatrix}u^{\left(\ell\right)}\left(\xi,x\right)\\
u^{\left(\ell\right)}\,'\left(\xi,x\right)
\end{bmatrix}\right|\right|
\end{eqnarray*}
so for such values of $\ell$
\beq \label{eq:comparing-ell-ell+1}
\frac{1}{2}\left|\left|\begin{bmatrix}u^{\left(\ell\right)}\left(\xi,x\right)\\
u^{\left(\ell\right)}\,'\left(\xi,x\right)
\end{bmatrix}\right|\right|
\leq\left|\left|\begin{bmatrix}u^{\left(\ell+1\right)}\left(\xi,x\right)\\
u^{\left(\ell+1\right)}\,'\left(\xi,x\right)
\end{bmatrix}\right|\right|
\eeq
and we can take $\widetilde{C}=2C$. Finally, \eqref{eq:Weissman_Thm_Eq1} follows by \eqref{eq:Definition_of_f_l_g_l} and \eqref{eq:Weissman_Thm_Eq2} is immediate from the definition of $\widetilde{A_j}^{(\ell)}$.
\end{proof}

%%%%%%%%%%%%%%%%%%%%%%%%%%%%%%%%%%%% Section 3 %%%%%%%%%%%%%%

\section{Proof of Theorem \ref{thm:Main_Result}}

\begin{proof}[Proof of Theorem \ref{thm:Main_Result}]
Given $\{\lambda_n \}_{n=1}^\infty$, let $\left\{ m_n\right\} _{n=1}^{\infty}$ satisfy
$m_n\rightarrow\infty$ monotonically, and also
\begin{equation} \label{eq:Demand_on_ell}
\lim_{n \rightarrow \infty}\left|\lambda_{n}\right|\widetilde{M_{m_{n}}}^{6}\longrightarrow0
\end{equation}
where we recall that $\widetilde{M_m}=\sqrt{m} M_m$ and $M_m=M_{I_m}$. Such a subsequence exists, since $\left|\lambda_{n}\right|\longrightarrow0$
as $n\longrightarrow\infty$, so for any natural $r$, there exists
an $N_r$ such that if $N_r<n$, then $\left|\lambda_{n}\right|\leq r^{-1} \widetilde{M_{r}}^{-6}$.
Thus, we can choose $m_{1}=m_{2}=\dots=m_{N_{2}}=1$ and $m_{N_{2}+1}=m_{N_{2}+2}=\dots=m_{N_{3}}=2$
etc.

Assume we've fixed $\{N_j \}_{j=1}^\ell$. Let $\widetilde{I}_\ell=I_{m_{\ell+1}-1/\ell}$ (a closed interval in the interior of $I_{m_{\ell+1}}$).  By Lemma
\ref{lem:convergence_when_divided_by_Kappa} there exists $\widehat{N}_{\ell}$ such that for any $x \geq \widehat{N}_{\ell}$
\beq \label{eq:Convergence-of-Truncated}
\left|\frac{S_{x}^{\left(\ell\right)}\left(\xi+\frac{a}{x},\xi+\frac{b}{x}\right)}{x\left(\frac{ \widetilde{A_{1}}^{\left(\ell\right)}\left(\xi,x\right)^{2}+\widetilde{A_{2}}^{\left(\ell\right)}\left(\xi,x\right)^{2}}{2}\right)}-\frac{\sin\left(\pi\cdot\rho\left(\xi\right)\left(a-b\right)\right)}{\pi\cdot\rho\left(\xi\right)\left(a-b\right)}\right|<1/\ell
\eeq
for any $\xi \in \widetilde{I}_\ell$, and $a, b \in \mathbb{C}$ with $|a|,|b| \leq \ell$. From now on, we shall only consider $a, b \in \mathbb{C}$ with $\textrm{Im}a, \textrm{Im}b \leq 1$. By taking $\widehat{N}_\ell$ large enough we may also assume that $\textrm{Re} \left(\xi+\frac{a}{x} \right), \textrm{Re} \left(\xi+\frac{b}{x} \right) \in I_{m_{\ell+1}}$ for $\xi \in \widetilde{I}_\ell$, $x \geq \widehat{N}_\ell$. We may also assume that for such values of the parameters
\beq \label{eq:comparingAs}
\frac{1}{2} \leq \frac{\left|\widetilde{A_1}^{(\ell)}\left(\xi+\frac{a}{x},x \right) \right|^2+\left|\widetilde{A_2}^{(\ell)}\left(\xi+\frac{a}{x},x \right) \right|^2}{\left|\widetilde{A_1}^{(\ell)}\left(\xi,x \right) \right|^2+\left|\widetilde{A_2}^{(\ell)}\left(\xi,x \right) \right|^2} \leq 2.
\eeq

We will show that, as long as we pick $N_{\ell+1}\geq \widehat{N}_\ell$ inductively,
\begin{equation} \label{eq:SineKernelAsymptotics1}
\frac{S_{x}\left(\xi+\frac{a}{x},\xi+\frac{b}{x}\right)}{S_{x}\left(\xi,\xi\right)}\underset{x\rightarrow\infty}{\longrightarrow}\frac{\sin\left(\pi\cdot\rho\left(\xi\right)\left(b-a\right)\right)}{\pi\cdot\rho\left(\xi\right)\left(b-a\right)}
\end{equation}
uniformly for $\xi$ in closed intervals of $\mathbb{R}^+$ and $ a, b$ in compact subsets of the strip $|\textrm{Im}z| \leq 1$.
Our strategy will be to first prove that
\beq \label{eq:universality-off-diagonal}
\lim_{x \rightarrow \infty}
\frac{S_{x}\left(\xi+\frac{a}{x},\xi+\frac{b}{x}\right)}{x\left(\frac{A_{1} \left(\xi,x\right)^{2}\xi+A_{2}\left(\xi,x\right)^{2}}{2}\right)} = \frac{\sin\left(\pi\cdot\rho\left(\xi\right)\left(a-b\right)\right)}{\pi\cdot\rho\left(\xi\right)\left(a-b\right)},
\eeq
uniformly for complex $|a|,|b| \leq C$ with $|\textrm{Im}a|, |\textrm{Im}b| \leq 1$ and $|a-b|>\delta>0$ and then deduce \eqref{eq:SineKernelAsymptotics1} from analyticity.

Note that for  $N_{\ell+1}\leq x \leq N_{\ell+2}$ $u\left(\xi,x\right)=u^{\left(\ell+1\right)}\left(\xi,x\right)$ and  $A_i(\xi,x)=A_i^{(\ell+1)}(\xi,x)$ $i=1,2$. We claim that it is enough to show that
\beq \label{eq:Kappa_Limit}
\begin{split}
&\underset{N_{\ell+1}\leq x\leq N_{\ell+2}}{\max}\left|\frac{S_{x}^{\left(\ell\right)}\left(\xi+\frac{a}{x},\xi+\frac{b}{x}\right)}{x\left(\frac{\widetilde{A_{1}}^{\left(\ell\right)}\left(\xi,x\right)^{2}+\widetilde{A_{2}}^{\left(\ell\right)}\left(\xi,x\right)^{2}}{2}\right)}-\frac{S_{x}^{\left(\ell+1\right)}\left(\xi+\frac{a}{x},\xi+\frac{b}{x}\right)}{x\left(\frac{\widetilde{A_{1}}^{\left(\ell+1\right)}\left(\xi,x\right)^{2}+\widetilde{A_{2}}^{\left(\ell+1\right)}\left(\xi,x\right)^{2}}{2}\right)}\right|\\
& \quad \quad \longrightarrow0
\end{split}
\eeq
as $\ell\longrightarrow\infty$. This is because, assuming \eqref{eq:Kappa_Limit}, given $\varepsilon>0$ we may choose $L$ so that for any $\ell >L$ both the quantity in \eqref{eq:Kappa_Limit} and the left hand side of \eqref{eq:Convergence-of-Truncated} are smaller than $\varepsilon/2$. Now, for any $x > N_{L+1}$ it follows that $N_{\ell+1} \leq x \leq N_{\ell+2}$ for some $\ell>L$ so that
\beq \no
\frac{S_{x}\left(\xi+\frac{a}{x},\xi+\frac{b}{x}\right)}{x\left(\frac{A_{1} \left(\xi,x\right)^{2}\xi+A_{2}\left(\xi,x\right)^{2}}{2}\right)}=\frac{S_{x}^{\left(\ell+1\right)}\left(\xi+\frac{a}{x},\xi+\frac{b}{x}\right)}{x\left(\frac{\widetilde{A_{1}}^{\left(\ell+1\right)}\left(\xi,x\right)^{2}+\widetilde{A_{2}}^{\left(\ell+1\right)}\left(\xi,x\right)^{2}}{2}\right)}.
\eeq
The $\varepsilon/2$ bound on \eqref{eq:Kappa_Limit} and on \eqref{eq:Convergence-of-Truncated} then combine to show that
\beq \no
\lim_{x \rightarrow \infty}
\left| \frac{S_{x}\left(\xi+\frac{a}{x},\xi+\frac{b}{x}\right)}{x\left(\frac{A_{1} \left(\xi,x\right)^{2}\xi+A_{2}\left(\xi,x\right)^{2}}{2}\right)} - \frac{\sin\left(\pi\cdot\rho\left(\xi\right)\left(a-b\right)\right)}{\pi\cdot\rho\left(\xi\right)\left(a-b\right)} \right| < \varepsilon.
\eeq

We now proceed to prove \eqref{eq:Kappa_Limit}. Denoting $\kappa_{x}^{\left(\ell\right)}=\frac{\widetilde{A_{1}}^{\left(\ell\right)}\left(\xi,x\right)^{2}+\widetilde{A_{2}}^{\left(\ell\right)}\left(\xi,x\right)^{2}}{2}$,
we estimate
\begin{eqnarray*}
 &  & \left|\frac{S_{x}^{\left(\ell\right)}\left(\xi+\frac{a}{x},\xi+\frac{b}{x}\right)}{x\kappa_{x}^{\left(\ell\right)}}-\frac{S_{x}^{\left(\ell+1\right)}\left(\xi+\frac{a}{x},\xi+\frac{b}{x}\right)}{x\kappa_{x}^{\left(\ell+1\right)}}\right|\\
 & \leq & \left|\frac{S_{x}^{\left(\ell\right)}\left(\xi+\frac{a}{x},\xi+\frac{b}{x}\right)}{x\kappa_{x}^{\left(\ell+1\right)}}-\frac{S_{x}^{\left(\ell+1\right)}\left(\xi+\frac{a}{x},\xi+\frac{b}{x}\right)}{x\kappa_{x}^{\left(\ell+1\right)}}\right|\\
 & + & \left|\frac{S_{x}^{\left(\ell\right)}\left(\xi+\frac{a}{x},\xi+\frac{b}{x}\right)}{x\kappa_{x}^{\left(\ell\right)}}-\frac{S_{x}^{\left(\ell\right)}\left(\xi+\frac{a}{x},\xi+\frac{b}{x}\right)}{x\kappa_{x}^{\left(\ell+1\right)}}\right|\\
 & = & \left|\frac{S_{x}^{\left(\ell\right)}\left(\xi+\frac{a}{x},\xi+\frac{b}{x}\right)}{x\kappa_{x}^{\left(\ell+1\right)}}-\frac{S_{x}^{\left(\ell+1\right)}\left(\xi+\frac{a}{x},\xi+\frac{b}{x}\right)}{x\kappa_{x}^{\left(\ell+1\right)}}\right|\\
 & + & \left|\frac{S_{x}^{\left(\ell\right)}\left(\xi+\frac{a}{x},\xi+\frac{b}{x}\right)}{x\kappa_{x}^{\left(\ell\right)}}\right|\cdot\left|\frac{\kappa_{x}^{\left(\ell+1\right)}-\kappa_{x}^{\left(\ell\right)}}{\kappa_{x}^{\left(\ell+1\right)}}\right|
\end{eqnarray*}
As in the proof of Lemma \ref{lem:convergence_when_divided_by_Kappa}, we introduce the notation $\alpha_x=\xi+\frac{a}{x}$
and $\beta_x=\xi+\frac{b}{x}$; also, in the following calculation,
we will omit the second variable, which will always be $x$. By (\ref{eq:CD_Formula})
and by introducing the notation
\begin{equation}
\Delta u^{\left(\ell+1\right)}\left(\alpha_x\right)=u^{\left(\ell+1\right)}\left(\alpha_x\right)-u^{\left(\ell\right)}\left(\alpha_x\right)\label{eq:Definition_Of_Delta_U}
\end{equation}
we evaluate
\begin{equation} \nonumber
\begin{split}
&\left|\frac{S_{x}^{\left(\ell\right)}\left(\xi+\frac{a}{x},\xi+\frac{b}{x}\right)}{x\kappa_{x}^{\left(\ell+1\right)}}-\frac{S_{x}^{\left(\ell+1\right)}\left(\xi+\frac{a}{x},\xi+\frac{b}{x}\right)}{x\kappa_{x}^{\left(\ell+1\right)}}\right|\\
 & \quad = \frac{2}{a-b}\left[\widetilde{A_{1}}^{\left(\ell+1\right)}\left(\xi,x\right)^{2}+\widetilde{A_{2}}^{\left(\ell+1\right)}\left(\xi,x\right)^{2}\right]^{-1}\times\\
 & \qquad \Big(u^{\left(\ell\right)}\left(\alpha_x\right)u^{\left(\ell\right)}\,'\left(\beta_x\right)-u^{\left(\ell\right)}\left(\beta_x\right)u^{\left(\ell\right)}\,'\left(\alpha_x\right)\\
 & \qquad -u^{\left(\ell+1\right)}\left(\alpha_x\right)u^{\left(\ell+1\right)}\,'\left(\beta_x\right)+u^{\left(\ell+1\right)}\left(\beta_x\right)u^{\left(\ell+1\right)}\,'\left(\alpha_x\right)\Big)\\
 & \quad = \frac{2}{a-b}\left[\widetilde{A_{1}}^{\left(\ell+1\right)}\left(\xi,x\right)^{2}+\widetilde{A_{2}}^{\left(\ell+1\right)}\left(\xi,x\right)^{2}\right]\times\\
 & \qquad \Big(u^{\left(\ell\right)}\left(\beta_x\right)\Delta u^{\left(\ell+1\right)}\,'\left(\alpha_x\right)+\Delta u^{\left(\ell+1\right)}\left(\beta_x\right)u^{\left(\ell\right)}\,'\left(\alpha_x\right)\\
 & \qquad -u^{\left(\ell\right)}\left(\alpha_x\right)\Delta u^{\left(\ell+1\right)}\,'\left(\beta_x\right)-\Delta u^{\left(\ell+1\right)}\left(\alpha_x\right)u^{\left(\ell\right)}\,'\left(\beta_x\right)\\
 & \qquad +\Delta u^{\left(\ell+1\right)}\left(\beta_x\right)\Delta u^{\left(\ell+1\right)}\,'\left(\alpha_x\right)\\
 & \qquad -\Delta u^{\left(\ell+1\right)}\left(\alpha_x\right)\Delta u^{\left(\ell+1\right)}\,'\left(\beta_x\right)\Big).
\end{split}
\end{equation} \nonumber

By (\ref{eq:Weissman_Thm_Eq}) and (\ref{eq:f=000026g_VS_u}) (recall that $\widetilde{M_m}=M_m \max_{\xi \in I_m}\max \left(\sqrt{\xi}, \sqrt{\xi}^{-1} \right)$)
\begin{eqnarray*}
\left|\left|\begin{bmatrix}u^{\left(\ell+1\right)}\left(\alpha_x\right)\\
u^{\left(\ell+1\right)}\,'\left(\alpha_x\right)
\end{bmatrix}-\begin{bmatrix}u^{\left(\ell\right)}\left(\alpha_x\right)\\
u^{\left(\ell\right)}\,'\left(\alpha_x\right)
\end{bmatrix}\right|\right| & \leq & \widetilde{C}M_{m_{\ell}}\left|\lambda_{\ell+1}\right|\cdot\left|\left|\begin{bmatrix}u^{\left(\ell+1\right)}\left(\alpha_x\right)\\
u^{\left(\ell+1\right)}\,'\left(\alpha_x \right)
\end{bmatrix}\right|\right|\\
 & \leq & \widetilde{C}\widetilde{M_{m_{\ell}}}^{2}\left|\lambda_{\ell+1}\right|\left|\left|\begin{bmatrix}\widetilde{A_{1}}^{\left(\ell+1\right)}\left(\alpha_x, x\right)\\
\widetilde{A_{2}}^{\left(\ell+1\right)}\left(\alpha_x,x\right)
\end{bmatrix}\right|\right| \\
& \leq & 2\widetilde{C}\widetilde{M_{m_{\ell}}}^{2}\left|\lambda_{\ell+1}\right|\left|\left|\begin{bmatrix}\widetilde{A_{1}}^{\left(\ell+1\right)}\left(\xi, x\right)\\
\widetilde{A_{2}}^{\left(\ell+1\right)}\left(\xi,x\right)
\end{bmatrix}\right|\right|
\end{eqnarray*}
where the last inequality follows from \eqref{eq:comparingAs}. A similar estimate holds for $\beta_x$. Therefore
\begin{equation} \nonumber
\begin{split}
&\left|\frac{S_{x}^{\left(\ell\right)}\left(\xi+\frac{a}{x},\xi+\frac{b}{x}\right)}{x\kappa_{x}^{\left(\ell+1\right)}}-\frac{S_{x}^{\left(\ell+1\right)}\left(\xi+\frac{a}{x},\xi+\frac{b}{x}\right)}{x\kappa_{x}^{\left(\ell+1\right)}}\right| \\
& =  \frac{2}{a-b}\left|\left|\begin{bmatrix}\widetilde{A_{1}}^{\left(\ell+1\right)}\left(\xi,x\right)\\
\widetilde{A_{2}}^{\left(\ell+1\right)}\left(\xi,x\right)
\end{bmatrix}\right|\right|^{-2} \\
&\qquad \times  \Big|u^{\left(\ell\right)}\left(\beta_x\right)\Delta u^{\left(\ell+1\right)}\,'\left(\alpha_x\right) \\
&\qquad +\Delta u^{\left(\ell+1\right)}\left(\beta_x\right)u^{\left(\ell\right)}\,'\left(\alpha_x\right)\\
   & \qquad  -u^{\left(\ell\right)}\left(\alpha_x\right)\Delta u^{\left(\ell+1\right)}\,'\left(\beta_x\right) \\
   &\qquad -\Delta u^{\left(\ell+1\right)}\left(\alpha_x\right)u^{\left(\ell\right)}\,'\left(\beta_x\right)\\
 &  \qquad +\Delta u^{\left(\ell+1\right)}\left(\beta_x\right)\Delta u^{\left(\ell+1\right)}\,'\left(\alpha_x\right)\\
 & \qquad -\Delta u^{\left(\ell+1\right)}\left(\alpha_x\right)\Delta u^{\left(\ell+1\right)}\,'\left(\beta_x\right)\Big|\\
 & \leq  \frac{4}{a-b}\widetilde{C} \widetilde{M_{m_{\ell}}}^{2}\left|\lambda_{\ell+1}\right|\left|\left|\begin{bmatrix}\widetilde{A_{1}}^{\left(\ell+1\right)}\left(\xi,x\right)\\
\widetilde{A_{2}}^{\left(\ell+1\right)}\left(\xi,x\right)
\end{bmatrix}\right|\right|^{-1}\times\\
 &   \,\,\,\,\Big(2\left|u^{\left(\ell\right)}\left(\alpha_x \right)\right|+2\left|u^{\left(\ell\right)}\left(\beta_x \right)\right|+\left|u^{\left(\ell\right)}\,'\left(\alpha_x \right)\right|+\left|u^{\left(\ell\right)}\,'\left(\beta_x\right)\right|\\
 &   +\left|u^{\left(\ell+1\right)}\left(\alpha_x\right)\right|+\left|u^{\left(\ell+1\right)}\left(\beta_x\right)\right|\Big).
\end{split}
\end{equation}

We are almost done with the first term. the only real issue is that the numerator is evaluated at $\alpha_x$ and $\beta_x$, while the denominator is evaluated at $\xi$. To fix this, use \eqref{eq:Weissman_Thm_Eq} to replace $u^{(\ell+1)}$ with $u^{(\ell)}$ (for sufficiently large $\ell$) and then \eqref{eq:f=000026g_VS_u} to replace $u^{(\ell)}$ with $\widetilde{A}^{(\ell)}$ (paying a price with some fixed constants and powers of $\widetilde{M_{m_\ell}}$). Now apply \eqref{eq:comparingAs} to replace the $\alpha_x$'s and $\beta_x$'s with $\xi$ and use \eqref{eq:Weissman_Thm_Eq2} and  \eqref{eq:f=000026g_VS_u} again to show that the numerator can be bounded by (a constant times) $\|\widetilde{A}^{(\ell+1)}\|$. Since $\widetilde{M_{m_\ell}}>1$ we conclude that for $\ell$ sufficiently large
\begin{eqnarray*}
\left|\frac{S_{x}^{\left(\ell\right)}\left(\xi+\frac{a}{x},\xi+\frac{b}{x}\right)}{x\kappa_{x}^{\left(\ell+1\right)}}-\frac{S_{x}^{\left(\ell+1\right)}\left(\xi+\frac{a}{x},\xi+\frac{b}{x}\right)}{x\kappa_{x}^{\left(\ell+1\right)}}\right| & \leq & \frac{D}{a-b}\tilde{C}\left|\lambda_{\ell+1}\right|\widetilde{M_{m_{\ell}}}^{6}
\end{eqnarray*}
(where $D$ is some universal constant) so that by (\ref{eq:Demand_on_ell}), we are done with the first term.

As for the second term, $\left|\frac{S_{x}^{\left(\ell\right)}\left(\xi+\frac{a}{x},\xi+\frac{b}{x}\right)}{x\kappa_{x}^{\left(\ell\right)}}\right|\cdot\left|\frac{\kappa_{x}^{\left(\ell+1\right)}-\kappa_{x}^{\left(\ell\right)}}{\kappa_{x}^{\left(\ell+1\right)}}\right|$,
the left factor $\left|\frac{S_{x}^{\left(\ell\right)}\left(\xi+\frac{a}{x},\xi+\frac{b}{x}\right)}{x\kappa_{x}^{\left(\ell\right)}}\right|$
is bounded since, by Lemma \ref{lem:convergence_when_divided_by_Kappa}, it converges (uniformly
on $I_{m}$) to the sine kernel. Regarding the right factor,
\begin{eqnarray*}
\frac{\kappa_{x}^{\left(\ell+1\right)}-\kappa_{x}^{\left(\ell\right)}}{\kappa_{x}^{\left(\ell+1\right)}} & = & \frac{\widetilde{A_{1}}^{\left(\ell+1\right)}\left(\xi,x\right)^{2}+\widetilde{A_{2}}^{\left(\ell+1\right)}\left(\xi,x\right)^{2}-\widetilde{A_{1}}^{\left(\ell\right)}\left(\xi,x\right)^{2}-\widetilde{A_{2}}^{\left(\ell\right)}\left(\xi,x\right)^{2}}{\widetilde{A_{1}}^{\left(\ell+1\right)}\left(\xi,x\right)^{2}+\widetilde{A_{2}}^{\left(\ell+1\right)}\left(\xi,x\right)^{2}}\\
 & = & \frac{\left(\widetilde{A_{1}}^{\left(\ell+1\right)}-\widetilde{A_{1}}^{\left(\ell\right)}\right)\left(\widetilde{A_{1}}^{\left(\ell+1\right)}+\widetilde{A_{1}}^{\left(\ell\right)}\right)+\left(\widetilde{A_{2}}^{\left(\ell+1\right)}-\widetilde{A_{2}}^{\left(\ell\right)}\right)\left(\widetilde{A_{2}}^{\left(\ell+1\right)}+\widetilde{A_{2}}^{\left(\ell\right)}\right)}{\left(\widetilde{A_{1}}^{\left(\ell+1\right)}\right)^{2}+\left(\widetilde{A_{2}}^{\left(\ell+1\right)}\right)^{2}}
\end{eqnarray*}

By \eqref{eq:Weissman_Thm_Eq2}, the definition of $\widetilde{A}^{(\ell)}$, and \eqref{eq:f=000026g_VS_u}
\begin{eqnarray*}
\left|\widetilde{A_{1}}^{\left(\ell+1\right)}-\widetilde{A_{1}}^{\left(\ell\right)}\right| & \leq & \widetilde{C}\widetilde{M_{m}}^{3}\left|\lambda_{\ell+1}\right|\left|\left|\begin{bmatrix}\widetilde{A_{1}}^{\left(\ell+1\right)}\\
\widetilde{A_{2}}^{\left(\ell+1\right)}
\end{bmatrix}\right|\right|
\end{eqnarray*}
and the same bound applies to $\left|\widetilde{A_{2}}^{\left(\ell+1\right)}-\widetilde{A_{2}}^{\left(\ell\right)}\right|$.
Therefore,
\begin{eqnarray*}
\frac{\kappa_{x}^{\left(\ell+1\right)}-\kappa_{x}^{\left(\ell\right)}}{\kappa_{x}^{\left(\ell+1\right)}} & \leq & \frac{\widetilde{C}\widetilde{M_{m}}^{3}\left|\lambda_{\ell+1}\right|\left(\left|\widetilde{A_{1}}^{\left(\ell+1\right)} \right|+\left|\widetilde{A_{1}}^{\left(\ell\right)}\right|+\left|\widetilde{A_{2}}^{\left(\ell+1\right)}\right|+\left|\widetilde{A_{2}}^{\left(\ell\right)}\right|\right)}{\left|\left|\begin{bmatrix}\widetilde{A_{1}}^{\left(\ell+1\right)}\\
\widetilde{A_{2}}^{\left(\ell+1\right)}
\end{bmatrix}\right|\right|}\\
 & \leq & 8\widetilde{C}\widetilde{M_{m}}^{5}\left|\lambda_{\ell+1}\right|
\end{eqnarray*}
for $\ell$ sufficiently large (\eqref{eq:comparing-ell-ell+1}  and \eqref{eq:Definition_of_f_l_g_l} are used).
By \eqref{eq:Demand_on_ell} we are done with the second
term as well.

We have shown \eqref{eq:universality-off-diagonal} uniformly for $\xi$ in compact subsets of $\mathbb{R}^+$ and $a, b$ complex with $|\textrm{Im} a|, |\textrm{Im} b| \leq 1$, $|a|, |b| \leq C$ for any $C>0$, and $|a-b| >\delta$ for some $\delta>0$. We now repeat the argument at the end of the proof of Lemma \ref{lem:convergence_when_divided_by_Kappa}. Since for fixed $a$, all functions involved are analytic in $b$ (note that $\kappa_x^{\ell}>0$ for all $x, \xi$), the Cauchy formula implies uniform convergence also for $|a-b| \leq \delta$. This implies \eqref{eq:universality-off-diagonal} uniformly without the restriction $|a-b| >\delta$. Since the limit for $a=b=0$ is $1$, this implies \eqref{eq:SineKernelAsymptotics1} uniformly for $\xi$ in compact subsets of $\mathbb{R}^+$ and $a,b$ in compact subsets of $\mathbb{R}$ and finishes the proof.

\end{proof}

%%%%%%%%%%%%%%%%%%%%%%%%%%%%%%%%%%%%%%%%%%% Appendices %%%%%%%%%%%%%%

\end{document}